\newtheorem{theorem}{Theorem}
\title{\begin{center} A Comprehensive and Detailed Within-Host Modeling Study involving crucial biomarkers  and Optimal Drug regimen for  Type - I Lepra Reaction : A Deterministic Approach  \end{center}}}
\author[1]{Dinesh Nayak}
\author[2]{Bishal Chhetri}
\author[3]{D. K. K. Vamsi}
\author[4]{Swapna Muthusamy}
\author[5]{Vijay M. Bhagat}
\affil[1, 2, 3]{ \ Department of Mathematics and Computer Science, Sri Sathya Sai Institute of Higher Learning, India.}
\affil[4,5]{Central Leprosy Teaching and Research Institute - CLTRI, Chennai, India.}
\affil[1]{First Author. Email: dineshnayak@sssihl.edu.in}
\affil[3]{Corresponding author. Email: dkkvamsi@sssihl.edu.in}
\date{August 2021}
\begin{document}

\maketitle

\begin{abstract}
  Leprosy (Hansen's disease) is an infectious, neglected tropical disease caused by the {\textit{Mycobacterium Leprae (M. Leprae).}} Each year there are approximately $ 2, 02,189 $ new cases are detected globally. In the year 2017 more than half million people were disabled due to leprosy and almost 50000 new cases are added every year world wide. In leprosy, lepra reactions are the major cause for nerve damage leading to disability. Early detection of lepra reactions through study of biomarkers have important role in prevention of subsequent disabilities.  To our knowledge there seems to be very limited literature available on within-host modeling at cellular level involving   the crucial biomarkers  and the possible optimal drug regimen for leprosy disease and lepra reactions. Motivated by these observations, in this study, we have proposed and analyzed a three dimensional mathematical model to capture the dynamics of susceptible schwann cells, infected schwann cells and the bacterial load based on the pathogenesis of leprosy. We initially have  established the existence of solution and later validated the model through the disease characteristics of leprosy.  Further we dealt with the  local and global stability of different equilibria   about the reproduction number value $\mathcal{R}_{0} = 1$. Later for numerical studies  we estimated the parameters from various clinical papers to make the model more practical. The sensitivity of couple of parameters was evaluated through Partial Rank Correlation Coefficient  (PRCC) method to find out the single most influential parameter and also combination of two most influential parameters was studied using Spearman's Rank Correlation Coefficient  (SRCC) method.  The sensitivity of other remaining parameters was evaluated using Sobol's index. We then have framed and studied an optimal control problem considering  the different medication involved in the Multi Drug Therapy (MDT) as control variables. We further studied  this optimal control problem along with both MDT and  steroid interventions. Finally we did the comparative and effectiveness study of these different control interventions. The  finding from this novel and comprehensive study will help the clinicians and public health researchers involved in the process of elimination and  eradication of  leprosy. 
 \end{abstract}

\section*{Keywords} \vspace{.25cm}

Hansen's disease; type - I  lepra reaction ; PRCC method; SRCC method; Sobol's Index; MDT;  Comparative and effectiveness study

\section{Introduction}

\vspace{.2cm}

\hspace{0.3 in}Leprosy  is an infection caused by
slow-growing bacteria called {\textit{ Mycobacterium leprae.}} Leprosy
is also known as  {Hansen disease} and it is considered
to be the oldest disease known to humans. Primarily
the bacteria  affects the skin and peripheral nerves of the
host body. In some of the cases it affects the the mucosa
of the upper respiratory tract and the eyes. According to the  WHO report \cite{world2020global}, global annual number of new cases detected in 2019 was about 2, 02,189.  In the year 2017 more than half million people were disabled due to leprosy and almost 50000 are added
every year world wide. In leprosy, lepra reactions are the major cause for nerve damage leading to disability. Early detection of lepra reactions through study of biomarkers have important role in prevention of subsequent disabilities.  \\

\hspace{0.3 in} During the course of the leprosy disease there can be  sudden changes in immune-mediated response to Mycobacterium leprae antigen which are referred to as leprosy (lepra) reactions. The reactions manifest as acute inflammatory episodes rather than chronic infectious course. There are mainly two types of leprosy reactions. Type 1 reaction is associated with cellular immunity and particularly with the reaction of T helper 1 (Th1) cells to mycobacterial antigens. This reaction involves exacerbation of old lesions leading to the erythematous appearance. Type 2 reaction or erythema nodosum leprosum (ENL) is associated with humoral immunity. It is characterized by systemic symptoms along with new erythematous subcutaneous nodules.  \\

\hspace{0.3 in} Several clinical and experimental studies has been done on Leprosy. Some
works deal about the growth of the \textit{M. Leprae} \cite{ojo2022mycobacterium}, some on
pathogenesis \cite{massone2022pathogenesis}. Now in the context of the
mathematical modeling of the disease, there are some contributions that explore the dynamics of
transmission  of leprosy at population level \cite{blok2015mathematical}. In \cite{giraldo2018multibacillary} the 
transmission dynamics of the multibacillary leprosy (MB) and paucibacillary leprosy (PB)
including a delay is dealt with. Some works dealing with the  cellular level  dynamics is explored in   \cite{ghosh2021mathematical}. To our knowledge as of date  there is no work done yet to explore the dynamics at the level of  bio-markers and also there seems to be no mathematical literature available dealing with the optimal drug regimen for treating leprosy and lepra reactions. A mathematical modeling study to this extent will help the clinicians to dissemination of the leprosy by targeting the crucial biomarkers  with minimal damage and also helps them for the optimal drug regimen. \\

\hspace{0.3 in} Motivated by the above observations,  in this study we have proposed and
analyzed an within-host  three dimensional mathematical model to capture the dynamics of susceptible schwann cells, infected schwann cells and the bacterial load involving the causation biomarkers  for type - I  lepra reaction  based on a detailed flow chart dealing with the pathogenesis of leprosy devloped from the clinical works \cite{ridley2013pathogenesis,sasaki2001mycobacterium, weddell1963pathogenesis}. We initially study the natural history of the disease followed studies on the optimal drug regimen for type - I  lepra reaction. \\  

\hspace{0.3 in} The section wise division of this article is as follows.  In section 2 we formulate the mathematical model dealing with the type - I  lepra reaction  based on the pathogenesis of type - I  lepra reaction . Later in section 3 we establish the existence, positivity and boudedness of the developed model followed by the  local and global stability of different equilibria about the reproduction number value $\mathcal{R}_{0} = 1$ followed by bifurcation analysis. Further in section 4 we numerically depict the theoretical findings of section 3. We validate the proposed model via the leprosy disease characteristics using 2D heat plots in section 5. Further in section 6 we perform the sensitivity analysis of the model  parameters. Later in section 7  we do the  optimal control studies  considering the different
medication involved in the Multi Drug Therapy (MDT) as control variables followed by  optimal
control studies involving both  MDT and steroid interventions. Finally we do the comparative and effectiveness study of these different control interventions in section 8. We do the discussion and conclusion in section 9.    

\includepdf[pages=-,pagecommand={},width=0.9\textwidth]{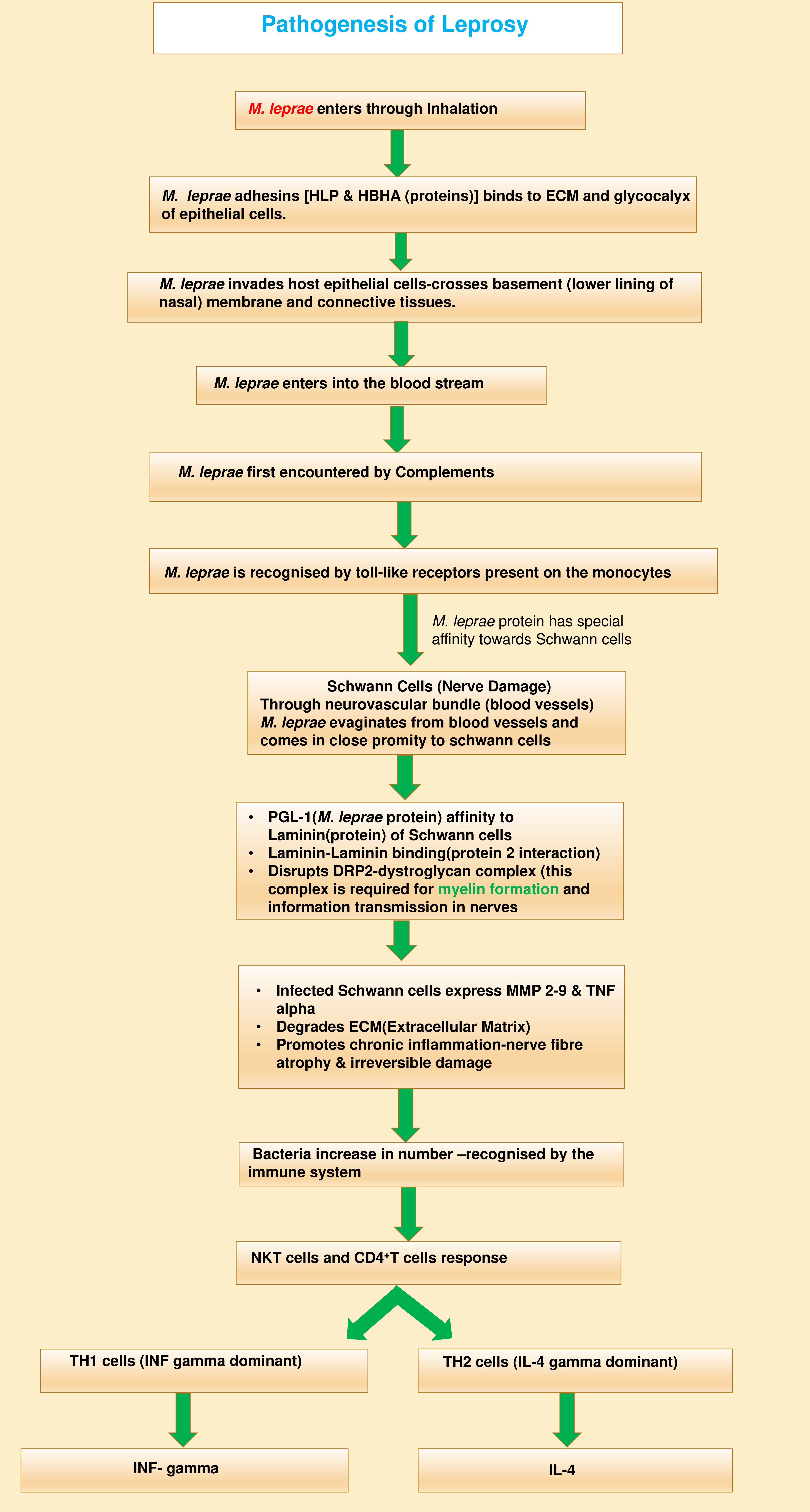}

\section{mathematical model formulation} \vspace{.2cm}

\hspace{0.3 in} Based on the pathogenesis of leprosy dealt in the flow chart earlier  we consider a three compartment model dealing with Susceptible  schwann cells $S(t)$, Infected schwann cells  $I(t)$ and the Bacterial load $B(t)$. We have taken the help of system of ODE's  to interpret the biological dynamics in term of mathematical equations.  \\

\hspace{0.3 in} The dynamics of the susceptible cells i.e. $\frac{dS}{dt}$ will depend on the natural birth rate $\omega$. Also according to the law of mass action the susceptible cell decrease at a rate $\beta$ hence the term $-\beta S B.$   The susceptible cells decrease due the natural death and and the cytokines responses. Next for the dynamics of the infected  cells i.e. $\frac{dI}{dt}$ the infected cells increase by  $\beta S B$ and decrease by  the  natural death and by cytokines responses. The growth of the bacteria depends on the burst rate of the infected cells. Therefore  the compartment $\frac{dB}{dt}$ has $\alpha I$ and the bacterial load decreases due to natural death  of the bacteria and death due to the cytokines. In summary we propose the following within-host model.

\begin{eqnarray}
   	\frac{dS}{dt}& =&  \omega \ - \beta SB  - \gamma S - \mu_{1} S  \label{sec2equ1} \\
   	\frac{dI}{dt} &=& \beta SB \ -\delta I - \mu_{1} I   \label{sec2equ2}\\ 
   	\frac{dB}{dt} &=&  \alpha I  \ - ( d_{11}  + d_{12}  + d_{13}  +  d_{14}+ d_{15} + d_{16} + d_{17} )B    \ -  \mu_{2} B \label{sec2equ3}
\end{eqnarray}

\begin{table}[ht!]   
 \centering 
\begin{tabular}{|l|l|} 
\hline

\textbf{Symbols} &  \textbf{Biological Meaning} \\  
\hline\hline 
$S$ & Susceptible schwann cells  \\
\hline\hline

$I$ & Infected schwann cells  \\
\hline\hline
$B$ & Bacterila load  \\

\hline\hline
$\omega$& Natural birth rate of the  susceptible cells\\

\hline\hline
$\beta$ & Rate at which schwann cells are infected  \\

\hline\hline
$\gamma$& Death rate of the  susceptible cells due to cytokines \\

\hline\hline
$\mu_{1}$ & Natural death rate of  schwann cells and  infected\\
& schwann cells \\

\hline\hline
$\delta$ & Death rate of infected schwann cells  due to cytokines\\

\hline\hline
$\alpha$ & Burst rate of bacterial particles \\

\hline\hline

$d_{11}, \hspace{.25cm} d_{12}, \hspace{.25cm} d_{13}, \hspace{.25cm} d_{14}, \hspace{.25cm} d_{15}, \hspace{.25cm} d_{16},\hspace{.25cm} d_{17}$ & Rates at which {\textit{ M. Leprae }}
is removed\\
&because of the release of cytokines IL-2, IL-7\\
& $TNF-\alpha,  \hspace{.2cm} IFN-\gamma$, IL -12, 
 IL- 15, IL-17  respectively   \\
\hline\hline
$\mu_{2}$ & Natural death rate of {\textit{ M. Leprae }}\\
\hline

\end{tabular}
\end{table} \vspace{.2cm}

\section{Stability Analysis} \vspace{.2cm}

\subsection{{\bf{Positivity and Boundedness}}} 

\begin{theorem}{\bf{Positivity}}:
For the model (\ref{sec2equ1}) - (\ref{sec2equ3}) if initially  $S(0) > 0, I(0) > 0$ and $B(0) > 0$ then for all $t\in [0, t_{0}]$ where $t_{0} > 0$, $S(t),  I(t),  B(t)$ will remain positive in $\mathbb{R}^{3}_{+}$.
\end{theorem}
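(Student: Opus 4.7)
The plan is to use the integrating-factor representation of each equation, together with a continuity (supremum) argument, so that coupling between the three compartments does not cause trouble.

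First, I would fix the initial data $S(0),I(0),B(0)>0$ and define $t^{\ast}=\sup\{t\in[0,t_0]:S(s)>0,\,I(s)>0,\,B(s)>0\text{ for all }s\in[0,t]\}$. By continuity of solutions and the assumption that all three initial values are strictly positive, $t^{\ast}>0$. If $t^{\ast}\geq t_0$ we are done, so the task reduces to ruling out $t^{\ast}<t_0$. In the latter case, continuity forces at least one of $S(t^{\ast}),I(t^{\ast}),B(t^{\ast})$ to vanish while the others remain nonnegative on $[0,t^{\ast}]$.

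Next, on $[0,t^{\ast}]$ the coefficients $B(s),S(s)B(s),I(s)$ appearing linearly in each equation are continuous and nonnegative, so I can rewrite each ODE as a linear first-order equation and solve by the integrating factor. For $S$ I would get
\begin{equation*}
S(t)=e^{-\int_0^t(\beta B(\tau)+\gamma+\mu_1)\,d\tau}\!\left[S(0)+\int_0^t \omega\, e^{\int_0^s(\beta B(\tau)+\gamma+\mu_1)\,d\tau}\,ds\right],
\end{equation*}
and analogous formulas for $I(t)$ with forcing $\beta S(s)B(s)\geq 0$ and decay rate $\delta+\mu_1$, and for $B(t)$ with forcing $\alpha I(s)\geq 0$ and decay rate $d_{11}+\cdots+d_{17}+\mu_2$. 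Each representation is a sum of a strictly positive exponential of the positive initial value and a nonnegative integral term, so each of $S(t^{\ast}),I(t^{\ast}),B(t^{\ast})$ is strictly positive, contradicting the assumption that one of them vanishes at $t^{\ast}$.

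The only subtle point, and the step I would be most careful about, is the apparent circularity: the positivity of $I$ via the integrating factor depends on $S,B\geq 0$, while the formula for $B$ uses $I\geq 0$, and so on. The supremum argument resolves this cleanly, because on the closed interval $[0,t^{\ast}]$ all three quantities are already known to be nonnegative by the definition of $t^{\ast}$, so the three integrating-factor representations can be invoked simultaneously. Hence $t^{\ast}=t_0$ and the solution stays in $\mathbb{R}^3_{+}$ throughout $[0,t_0]$.
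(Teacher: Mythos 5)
Your proof is correct, and its computational core---bounding each component below by a positive multiple of an exponential via an integrating factor---is the same mechanism the paper uses. The difference is in the logical scaffolding, and there your version is genuinely stronger. The paper proceeds by dropping the source terms to obtain three differential inequalities, $\tfrac{dS}{dt}\geq -(\beta B+\gamma+\mu_1)S$, $\tfrac{dI}{dt}\geq -(\delta+\mu_1)I$, $\tfrac{dB}{dt}\geq -(y+\mu_2)B$, and integrating each; but the second requires $\beta SB\geq 0$ and the third requires $\alpha I\geq 0$, which is exactly the nonnegativity being proved. That is the circularity you flag, and the paper leaves it unaddressed (its exponential bounds also drop the factors $S(0)$, $I(0)$, $B(0)$ and the coefficient $\beta$, though these are harmless since any real exponential is positive). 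Your supremum argument---defining $t^{\ast}$ as the first time positivity could fail, observing that all three components are nonnegative on $[0,t^{\ast}]$, and then invoking the three variation-of-constants representations simultaneously to contradict vanishing at $t^{\ast}$---is precisely the bootstrap needed to turn the paper's computation into a complete proof. The only cosmetic loose end is the endpoint: after concluding $t^{\ast}=t_0$ you have strict positivity on $[0,t_0)$, and one further application of the same integrating-factor formulas (the forcing terms being nonnegative on all of $[0,t_0]$ by continuity) yields strict positivity at $t_0$ itself.
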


\begin{proof}

We now aim to show that for all $t\in [0,t_{0}]$, $S(t), I(t)$ and $B(t)$ will be positive in $\mathbb{R}^{3}_{+}$. \\

Consider 
\begin{eqnarray*}
     \frac{dS}{dt}& = &  \omega  - \beta SB  - \gamma S - \mu_{1} S   \\
                 & \geq & - (\beta B  - \gamma  - \mu_{1}) S   \\
\end{eqnarray*}
  On solving the above inequality we get  
\begin{equation*}
     S(t) \geq e^{ - (\gamma t + \mu_{1}t +\int B dt ) }  
                 >  0    \\
\end{equation*}
$$
    \therefore S(t) > 0, \hspace{0.01 in}  \forall t \in [0,t_{0}].
$$
In similar lines we see that
$$\frac{dI}{dt} \geq  -\delta I - \mu_{1} I \implies I(t)   \geq e^{-(\delta + \mu_{1})t}> 0$$
$$\frac{dB}{dt} \geq  - y B - \mu_{2} B \implies B(t)   \geq e^{-(y + \mu_{2})t}> 0$$    
Here $y=( d_{11}  + d_{12}  + d_{13}  +  d_{14}+ d_{15} + d_{16} + d_{17} )$.
Thus for all $ t\in [0,t_{0}]$, $S(t), I(t)$ and $B(t)$ will remain positive i.e. in $\mathbb{R}^{3}_{+}$.
\end{proof}
\begin{theorem}{\bf{Boundedness}}:
There exists an upper bound for each of the variable $S(t), I(t), B(t)$ for all $t \in [0,t_{0}].$

\end{theorem}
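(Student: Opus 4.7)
The plan is to establish an upper bound for each variable in turn, exploiting the sign of the nonlinear terms and using a standard differential-inequality (comparison) argument. Throughout I will use the positivity already established in the previous theorem: since $S, I, B \geq 0$ on $[0, t_{0}]$, the nonnegative terms can be dropped from right-hand sides to produce clean linear upper-bound ODEs.

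First I would bound $S$ directly. Because $\beta S B \geq 0$ by positivity, equation (\ref{sec2equ1}) gives
\begin{equation*}
\frac{dS}{dt} \leq \omega - (\gamma + \mu_{1}) S.
\end{equation*}
The comparison principle then yields $S(t) \leq \max\bigl\{S(0), \tfrac{\omega}{\gamma + \mu_{1}}\bigr\}$ for all $t \in [0, t_{0}]$.

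Next, rather than bounding $I$ directly (which is hard because of the $\beta S B$ source term), I would introduce the auxiliary quantity $N(t) = S(t) + I(t)$. Adding (\ref{sec2equ1}) and (\ref{sec2equ2}), the $\pm \beta S B$ terms cancel, leaving
\begin{equation*}
\frac{dN}{dt} = \omega - (\gamma + \mu_{1}) S - (\delta + \mu_{1}) I \leq \omega - \mu_{1} N,
\end{equation*}
so $N(t) \leq \max\bigl\{N(0), \tfrac{\omega}{\mu_{1}}\bigr\} =: M_{1}$. Since $I \leq N$, this produces the required upper bound on $I$. Finally, with $I$ bounded by $M_{1}$, equation (\ref{sec2equ3}) gives $\frac{dB}{dt} \leq \alpha M_{1} - (y + \mu_{2}) B$ with $y = d_{11} + d_{12} + d_{13} + d_{14} + d_{15} + d_{16} + d_{17}$, and the same comparison argument delivers $B(t) \leq \max\bigl\{B(0), \tfrac{\alpha M_{1}}{y + \mu_{2}}\bigr\}$.

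There is no real obstacle; the only subtlety is the handling of $I$, where the bilinear coupling $\beta S B$ prevents a direct one-variable argument. The trick of passing to $S + I$ to cancel the infection term is standard for SI-type within-host models, and once that step is in place the remaining bounds follow by a straightforward cascade. I would conclude by collecting the three bounds into an invariant region $\Omega \subset \mathbb{R}^{3}_{+}$ in which all solutions starting in $\mathbb{R}^{3}_{+}$ remain, which will be useful for the subsequent stability analysis.
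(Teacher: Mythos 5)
Your proposal is correct and follows essentially the same route as the paper: both cancel the bilinear term by summing the $S$ and $I$ equations, apply a linear differential inequality to bound $S+I$ (hence each of $S$ and $I$ by positivity), and then cascade that bound into the $B$ equation via a comparison argument. The only cosmetic differences are your redundant preliminary bound on $S$ alone and your use of the cruder decay constant $\mu_{1}$ where the paper uses $k=\min\{\gamma+\mu_{1},\,\delta+\mu_{1}\}$.
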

\begin{proof}
Let us consider 
\begin{eqnarray*}
\dfrac{dS}{dt}+\dfrac{dI}{dt} &= &\omega - (\gamma+\mu_{1})S -(\delta+\mu_{1})I \\
\implies \dfrac{d(S+I)}{dt} &\leq& \omega -min\{(\gamma+\mu_{1}),(\delta+\mu_{1})\}(S+I)
\end{eqnarray*}
Considering $k=min\{(\gamma+\mu_{1}),(\delta+\mu_{1})\}$ and integrating the above we get, 
\begin{eqnarray*}
(S+I)(t)&\leq & \frac{\omega}{k} + c_{1}e^{-kt}\\
\end{eqnarray*}
Hence $$\limsup\limits_{t\to\infty}{(S+I)}\leq \limsup\limits_{t\to\infty}{\left(\frac{\omega}{k} + c_{1}e^{-kt}\right)} = \frac{\omega}{k}<\infty $$
$\therefore$ $(S+I)(t)$ is bounded thus $S(t), I(t)$ are  bounded. Since 
$$S(t),I(t)\leq (S+I)(t)$$
Now  for $t \in [0,t_{0}]$, there exist $S_{max}$ and $I_{max}$ such that $S(t)\leq S_{max} $, $I(t)\leq I_{max}. $

We now consider \  $ \dfrac{dB}{dt}= \alpha I - (y+\mu_{2})B.$

Solving the above differential equation for $B(t)$, we get,
\begin{eqnarray*}
       B e^{(y+\mu_{2})t} &=& \int \alpha I e^{(y+\mu_{2})t} dt\\
       &\leq& \int \alpha \frac{\omega}{k} e^{(y+\mu_{2})t} dt\\
       &=& \frac{\alpha\omega}{k(y+\mu_{2})}e^{(y+\mu_{2})t}+c_{2}\\
       \implies B(t) &\leq& \frac{\alpha\omega}{k(y+\mu_{2})}+c_{2}e^{-(y+\mu_{2})t}\\
\end{eqnarray*}
$$\therefore  \limsup\limits_{t\to\infty}{B(t)}\leq \limsup\limits_{t\to\infty}\left({\frac{\alpha \omega}{k(y+\mu_{2})}+c_{2}e^{-(y+\mu_{2})t}}\right) = \frac{\alpha \omega}{k(y+\mu_{2})}< \infty $$
Hence there exists an upper bound for  $B(t)$, say $B_{max}$  for  $t\in [0,t_{0}].$

Hence $S(t),I(t)$ and $B(t)$ all are bounded for  $t\in [0,t_{0}].$
\end{proof} 
 
\subsection{{\bf{Existence of the solution}}}
\begin{theorem}
Let $t_{0}>0$.  If the model (\ref{sec2equ1}) - (\ref{sec2equ3})  initially satisfies  $S(0) > 0, I(0) > 0$ and $B(0) > 0$  then $\forall t > 0$ there exists a unique solution for the system  in $\mathbb{R}^{3}_{+}$. 
\end{theorem}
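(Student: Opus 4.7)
The plan is to invoke the Picard--Lindel\"of (Cauchy--Lipschitz) theorem for local existence and uniqueness, and then extend the local solution to a global one by combining it with the a priori positivity and boundedness already proved in Theorems 1 and 2. First, I would rewrite the system (\ref{sec2equ1})--(\ref{sec2equ3}) in the compact vector form $\dot X = F(X)$ with $X = (S,I,B)^{T}$, where each component of $F$ is a polynomial of degree at most two in $S, I, B$. Being a polynomial map, $F$ is $C^{\infty}$ on all of $\mathbb{R}^{3}$, its Jacobian is continuous, and hence $F$ is locally Lipschitz on every compact subset of $\mathbb{R}^{3}_{+}$.

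Next, I would use Theorems 1 and 2 to fix a compact box $\Omega \subset \mathbb{R}^{3}_{+}$ that contains the initial point $(S(0), I(0), B(0))$ together with the uniform upper bounds $S, I \le \omega/k + |c_{1}|$ and $B \le \alpha\omega/(k(y+\mu_{2})) + |c_{2}|$ derived there. On $\Omega$ the supremum of $\|DF\|$ is finite, yielding an explicit global Lipschitz constant $L_{\Omega}$ for $F$ restricted to $\Omega$. Applying Picard--Lindel\"of in this setting produces a unique solution $X(t)$ defined on some maximal forward interval $[0, t^{\ast})$, and uniqueness follows immediately from the Lipschitz bound via Gr\"onwall's inequality.

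Finally, I would close the argument with the standard continuation criterion: the maximal interval is either $[0, \infty)$ or else $\|X(t)\| \to \infty$ as $t \uparrow t^{\ast}$. The boundedness established in Theorem 2 rules out blow-up, while Theorem 1 prevents the trajectory from leaving $\mathbb{R}^{3}_{+}$ through a coordinate hyperplane, so $t^{\ast} = \infty$ and the unique solution persists in $\mathbb{R}^{3}_{+}$ for all $t > 0$. The proof is essentially bookkeeping on top of the two preceding theorems; the only point requiring mild care is verifying that the chosen compact $\Omega$ is forward invariant so that the Lipschitz constant $L_{\Omega}$ can be treated as a uniform constant along the entire trajectory, and this follows directly from the a priori estimates already in hand.
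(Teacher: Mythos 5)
Your proposal is correct, and its core coincides with the paper's argument: both write the system as $\dot X = f(X)$, observe that the right-hand side is a polynomial (hence $C^\infty$ with continuous Jacobian, hence locally Lipschitz), and invoke the Picard--Lindel\"of / fundamental existence--uniqueness theorem. Where you differ is that the paper stops there, which strictly speaking only yields a \emph{local} solution on some small interval, even though the theorem as stated claims a solution for all $t>0$ in $\mathbb{R}^{3}_{+}$. Your additional step --- combining the a priori boundedness of Theorem 2 with the standard continuation criterion to rule out finite-time blow-up, and the positivity of Theorem 1 to keep the trajectory in the positive octant --- is exactly what is needed to upgrade local existence to the global statement, so your proof is the more complete of the two. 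One point worth stating explicitly if you write this up: Theorems 1 and 2 are a priori estimates proved \emph{assuming} a solution exists on $[0,t_0]$, so the logically clean order is to apply them on the maximal interval of existence $[0,t^{\ast})$ furnished by Picard--Lindel\"of and then conclude $t^{\ast}=\infty$; phrased that way there is no circularity. Also note that local Lipschitz continuity on all of $\mathbb{R}^{3}$ already suffices for uniqueness via Gr\"onwall --- you do not actually need the forward-invariant compact box $\Omega$ or a uniform Lipschitz constant, although constructing one does no harm.
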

\begin{proof}
 The system  (\ref{sec2equ1}) - (\ref{sec2equ3}) in the vectorial form is given by  
$$\dfrac{dX}{dt}=f(X)$$
where
$$ X=\begin{bmatrix}
S(t)\\
I(t)\\
B(t)
\end{bmatrix} and,
f(X)=\begin{bmatrix}
\omega \ - \beta SB  - \gamma S - \mu_{1} S\\
\beta SB \ -\delta I - \mu_{1} I   \\
\alpha I  \ - y B    \ -  \mu_{2} B
\end{bmatrix}  $$
Now we can see  that $f(X):\mathbb{R}^3 \to \mathbb{R}^3 $ has continuous derivative and thus it's locally {lipschitz} in $\mathbb{R}^{3}$. Hence from fundamental  existence and uniqueness theorem \cite{agarwal2008existence, sibuya1999basic},  we  can conclude the existence of unique solution for the system   (\ref{sec2equ1}) - (\ref{sec2equ3}).
\end{proof} 

\subsection{{\bf{Equilibrium points and the reproduction number (\texorpdfstring{$\mathcal{R}_{0}$}{TEXT})}}} 

The basic reproduction number for the system  (\ref{sec2equ1}) - (\ref{sec2equ3}) is  calculated using the next generation matrix method \cite{heffernan2005perspectives} and the expression for $\mathcal{R}_{0}$ is found to be 
$$\mathcal{R}_{0}=\frac{\alpha \beta \omega}{(\gamma+\mu_{1})(\delta+\mu_{1})(y+\mu_{2})}.$$

 We also see  that the system  (\ref{sec2equ1}) - (\ref{sec2equ3}) admits two equilibria namely, the infection/disease  free equilibrium $E_{0}=\left(\frac{\omega}{\mu_{1}},0,0\right)$ and the infected equilibrium $E^{*}=(S^{*},I^{*},B^{*})$, where
$$S^*=\frac{(\delta+\mu_{1})(y+\mu_{2})}{\alpha \beta}=\frac{\omega}{(\gamma+\mu_{1})\mathcal{R}_{0}}$$
$$I^*=\frac{\alpha \beta \omega-(\gamma+\mu_{1})(\delta +\mu_{1})(y+\mu_{2})}{\alpha \beta (\delta+\mu_{1})}=\frac{(\gamma+\mu_{1})(y+\mu_{2})(\mathcal{R}_{0}-1)}{\alpha\beta}$$

 $$B^*=\frac{\alpha \beta \omega-(\gamma+\mu_{1})(\delta +\mu_{1})(y+\mu_{2})}{ \beta (\delta +\mu_{1}) (y+\mu_{2})}=\frac{(\gamma+\mu_{1})(\mathcal{R}_{0}-1)}{\beta}$$.

\subsection{{\bf{Stability Analysis of \texorpdfstring{$E_{0}$}{TEXT}}}} 

\textbf{Local Stability:}\\

In the following we do the local stability analysis of the infection free equilibrium $E_{0}.$
\vspace{0.01 in}

The {Jacobian} matrix of the system at the infection free equilibrium $E_0$ is given by, 

\begin{equation*}
J_{E_{0}} = 
\begin{pmatrix}
-(\gamma+\mu_{1}) & 0 & \frac{-\beta\omega}{(\gamma+\mu_{1})} \\
0 & -(\delta+\mu_{1}) & \frac{\beta\omega}{(\gamma+\mu_{1})} \\
0 & \alpha & -(y+\mu_{2})
\end{pmatrix}
\end{equation*} 
The characteristic equation is given by, 

\begin{equation}
    \bigg(-(\gamma+\mu_{1})-\lambda\bigg)\bigg[\lambda^2+\{(\gamma+\mu_{1})+(y +\mu_{2})\}\lambda+(\gamma+\mu_{1})(y +\mu_{2})-\frac{\beta \alpha \omega}{(\gamma+\mu_{1})}\bigg]=0
\end{equation}
One of the  eigenvalues of the above equation is  $\lambda_{1}=-(\gamma+\mu_{1})$ which is less then zero  and the other two eigenvalues are calculated as follows:

Introducing $\mathcal{R}_{0}$ in the rest part of the equation 
\begin{equation}
 \lambda^2+\{(\gamma+\mu_{1})+(y +\mu_{2})\}\lambda+(\gamma+\mu_{1})(y +\mu_{2})(1-\mathcal{R}_{0})=0
\end{equation}
Letting  $A  (\gamma+\mu_{1})+(y +\mu_{2}) $ and $ D=(\gamma+\mu_{1})(y +\mu_{2})$ the roots of the above equation are given by
$$\lambda = \frac{1}{2}\big[-A \pm \sqrt{A^2+ 4(\mathcal{R}_{0}-1)D}\big]$$   
We now consider the following two cases for understanding the stability of infection free equilibrium. \\

\textbf{Case I:} {When $\mathcal{R}_{0} < 1$}\\

Further in this case we need to consider the following two sub cases:

\hspace{1cm}\textbf{(a)} $A^2+ 4(\mathcal{R}_{0}-1)D > 0$ \vspace{.3cm}

\hspace{1cm}\textbf{(b)}: $A^2+ 4(\mathcal{R}_{0}-1)D < 0$

\textbf{Sub-case (a)}: When $A^2+ 4(\mathcal{R}_{0}-1)D > 0$ then the eigenvalues are given by, 

$$\lambda_{2,3}=A \pm \sqrt{A^2+ 4(\mathcal{R}_{0}-1)D}  $$  which are less than zero.

Therefore the infection free equilibrium point $E_{0}$ is asymptotically stable in this case as all the eigenvalues are negative.
\hspace{0.1 in}

\textbf{Sub-case (b)}:  When $A^2+ 4(\mathcal{R}_{0}-1)D < 0$ the eigenvalues are complex conjugates with the negative real parts. Therefore in this case also we have $E_{0}$ to be asymptotically stable.
\vspace{0.1 in} 

Hence we conclude that $E_{0}$ is locally asymptotically stable (LAS) whenever $\mathcal{R}_{0} < 1.$ 
\vspace{0.1 in}

\textbf{Case II:} When $\mathcal{R}_{0} > 1$

In this case the characteristic equation has two negative eigenvalues and one positive eigenvalue. Hence whenever $\mathcal{R}_{0}> 1$ the infection free equilibrium $E_{0}$ becomes unstable. \\

\textbf{Global Stability:}\\

As in \textit{Andrei Korobeinikov} \cite{korobeinikov2004global}, we consider the \textit{Lyapunov function} of the system  (\ref{sec2equ1}) - (\ref{sec2equ3})  as
$$U(S,I,B)= S_{0}\bigg(\frac{S}{S_{0}}-\ln{\frac{S}{S_{0}}}\bigg)+ I +\frac{(\delta+\mu_{1})}{\alpha}B$$
Now $$ \frac{dU}{dt}= \omega\bigg(2-\frac{S}{S_{0}}-\frac{S_{0}}{S}\bigg)+\frac{(\delta+\mu_{1})(y+\mu_{2})}{\alpha}(\mathcal{R}_{0}-1)B$$
Here $\bigg(2-\frac{S}{S_{0}}-\frac{S_{0}}{S}\bigg)<0$ and for $\mathcal{R}_{0} < 1,$  the derivative $\frac{du}{dt} < 0$.\\

$\therefore$ For $\mathcal{R}_{0} < 1$ the disease free equilibrium $E_{0}$ is Globally Asymptotically Stable (GAS).

\subsection{{\bf{Stability Analysis of \texorpdfstring{$E^{*}$}{TEXT}}}} 

\textbf{Local Stability:}\\

The {Jacobian} matrix of the system  for $E^*$ is given by
$$
J = 
\begin{pmatrix}
-(\gamma+\mu_{1})\mathcal{R}_{0} & 0 & -\frac{(\delta+\mu_{1})(y+\mu_{2})}{\alpha} \\
(\gamma+\mu_{1})(\mathcal{R}_{0}-1) & -(\delta+\mu_{1}) & \frac{(\delta+\mu_{1})(y+\mu_{2})}{\alpha}\\
0 & \alpha & -(y+\mu_{2})
\end{pmatrix}
$$
The characterstic equation of the {Jacobian} $J$ evaluated at $E^*$ is given by,
$$\lambda^3 + \bigg(p+(\gamma+\mu_{1})\mathcal{R}_{0}\bigg)\lambda^2 + \bigg(p(\gamma+\mu_{1})\mathcal{R}_{0}\bigg)\lambda + q(\gamma+\mu_{1})\bigg(\mathcal{R}_{0}-1\bigg) = 0 \hspace{2cm}$$ 
where $p=(\gamma+\mu_{1})(y+\mu_{2})$ and $q=(\gamma+\mu_{1})(y+\mu_{2}).$
\vspace{0.01 in}

Since $\mathcal{R}_{0} > 1$, $(p+(\gamma+\mu_{1})\mathcal{R}_{0})> 0,(p(\gamma+\mu_{1})\mathcal{R}_{0}) > 0$ and $ q\mu_{1}(\mathcal{R}_{0}-1) > 0 $. Therefore if we substitute $\lambda=-\lambda$ in the above characteristic equation,  we get all the roots of equation to be negative from Descartes rule of sign change. Hence we conclude that the infected equilibrium point $E_{1}$ exists and remains asymptotically stable whenever $\mathcal{R}_{0} > 1$. \\

\textbf{Global Stability:}\\

Considering the \textit{Lyapunov function} of the system  (\ref{sec2equ1}) - (\ref{sec2equ3}) for $E^*$ as in \cite{korobeinikov2004global}
$$U^*(S,I,B)= S^{*}\bigg(\frac{S}{S^{*}}-\ln{\frac{S}{S^{*}}}\bigg)+ I^{*}\bigg(\frac{I}{I^{*}}-\ln{\frac{I}{I^{*}}}\bigg) +\frac{(\delta+\mu_{1})}{\alpha}B^{*}\bigg(\frac{B}{B^{*}}-\ln{\frac{B}{B^{*}}}\bigg)$$
 we can show the GAS of  $E^{*}$ when $\mathcal{R}_{0 }> 1.$ \\
 
 \subsection{{\bf{Bifurcation Analysis} }} \vspace{.2cm}
 
 We now use the method given by \textit{Bruno Buonomo} in \cite{buonomo2015note} to do the bifurcation analysis for the system  (\ref{sec2equ1}) - (\ref{sec2equ3}). \vspace{.2cm}

 \begin{theorem}
 The system   (\ref{sec2equ1}) - (\ref{sec2equ3}) undergoes a trans-critical bifurcation at $\mathcal{R}_{0}=1$ and it is forward.  
 \end{theorem}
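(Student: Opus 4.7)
The plan is to invoke the Castillo--Chavez--Song center manifold theorem (as formulated by Buonomo \cite{buonomo2015note}) with $\beta$ chosen as the bifurcation parameter. Since $\mathcal{R}_{0}$ depends linearly on $\beta$, the equation $\mathcal{R}_{0}=1$ inverts cleanly to give the threshold value $\beta^{*}=(\gamma+\mu_{1})(\delta+\mu_{1})(y+\mu_{2})/(\alpha\omega)$. First I would verify the spectral hypothesis at $(E_{0},\beta^{*})$: from the characteristic polynomial already computed in Section~3.4, one eigenvalue is $-(\gamma+\mu_{1})<0$, and the other two satisfy $\lambda^{2}+A\lambda+(\gamma+\mu_{1})(y+\mu_{2})(1-\mathcal{R}_{0})=0$, which at $\mathcal{R}_{0}=1$ factors as $\lambda(\lambda+A)=0$, providing a simple zero eigenvalue alongside two strictly negative ones, exactly as the theorem requires.

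Next I would compute the right eigenvector $w=(w_{1},w_{2},w_{3})^{T}$ of $J(E_{0},\beta^{*})$ associated with $\lambda=0$ by solving $J w=0$, and similarly the left eigenvector $v=(v_{1},v_{2},v_{3})$ with the normalization $v\cdot w=1$. Straightforward back-substitution gives $v_{1}=0$, $v_{2}$ proportional to $\alpha/(\delta+\mu_{1})$, $v_{3}$ free, and $w_{3}$ free with $w_{2}=(y+\mu_{2})w_{3}/\alpha$ and $w_{1}=-\beta^{*}S_{0}w_{3}/(\gamma+\mu_{1})$, where $S_{0}=\omega/(\gamma+\mu_{1})$. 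The key structural observation, which I would flag at this point, is that $v_{1}=0$ kills every contribution from $f_{1}$ in the bifurcation coefficients, and $w_{1}<0$ while $w_{2},w_{3},v_{2},v_{3}>0$.

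Then I would assemble the two CCS quantities
\begin{equation*}
a=\sum_{k,i,j}v_{k}w_{i}w_{j}\,\frac{\partial^{2}f_{k}}{\partial x_{i}\partial x_{j}}(E_{0},\beta^{*}),\qquad b=\sum_{k,i}v_{k}w_{i}\,\frac{\partial^{2}f_{k}}{\partial x_{i}\partial\beta}(E_{0},\beta^{*}).
\end{equation*}
The only nonzero pure second state-partials in the model come from the bilinear incidence $\beta SB$: $\partial^{2}f_{1}/\partial S\partial B=-\beta^{*}$ and $\partial^{2}f_{2}/\partial S\partial B=\beta^{*}$, so after using $v_{1}=0$ only the $k=2$ term survives and $a=2v_{2}w_{1}w_{3}\beta^{*}$, which is strictly negative by the sign pattern noted above. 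For $b$, the only surviving partials at $E_{0}$ (since $B=0$ there) are $\partial^{2}f_{1}/\partial B\partial\beta=-S_{0}$ and $\partial^{2}f_{2}/\partial B\partial\beta=S_{0}$, again yielding $b=v_{2}w_{3}S_{0}>0$ after $v_{1}=0$ kills the first.

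With $a<0$ and $b>0$, the Castillo--Chavez--Song/Buonomo criterion immediately yields a transcritical bifurcation at $\mathcal{R}_{0}=1$ with the branch emerging forward (supercritical), matching the existence and local stability of the positive equilibrium $E^{*}$ established earlier for $\mathcal{R}_{0}>1$. I expect the main obstacle to be purely bookkeeping: keeping signs and indices consistent while collapsing the triple and double sums, particularly making sure the normalization of $v$ and $w$ is carried through correctly so that the sign of $a$ is unambiguous and independent of the free scaling of the eigenvectors (which it is, since $a$ is homogeneous of degree one in each of $v$ and $w$ and the normalization $v\cdot w=1$ fixes only their joint scale).
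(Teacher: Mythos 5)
Your proposal is correct, but it takes a genuinely different route from the paper. The paper does not compute eigenvectors or bifurcation coefficients at all: it decomposes the right-hand side as $f_i=\mathcal{F}_i-\mathcal{V}_i$, checks the structural axioms A1--A5 of the next-generation framework together with the hypotheses H1--H3 (the only nonlinearity in the infected equations is the single bilinear incidence term $\beta x_2 x_3$, and there is no transfer from infected to uninfected compartments), and then invokes Proposition~1 of Buonomo's note, which guarantees that any model of this structural type has a forward transcritical bifurcation at $\mathcal{R}_0=1$. You instead unpack that proposition for this specific model: you carry out the full Castillo--Chavez--Song computation, identifying $\beta^*$, verifying the simple zero eigenvalue, solving for the left and right null vectors (correctly getting $v_1=0$, $w_1<0$, $v_2,v_3,w_2,w_3>0$), and reducing the coefficients to $a=2v_2w_1w_3\beta^*<0$ and $b=v_2w_3S_0>0$. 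I checked these against the Jacobian and they are right (note the paper's stated $E_0=(\omega/\mu_1,0,0)$ is a typo for $\omega/(\gamma+\mu_1)$, which you correctly use, and its quadratic factor mixes $\gamma$ and $\delta$; neither affects your argument). What each approach buys: the paper's verification is shorter and emphasizes that forwardness is forced by the bilinear-incidence structure alone, while yours is self-contained, makes the sign mechanism explicit, and does not rely on the reader trusting the cited proposition. One small imprecision: $a$ is homogeneous of degree one in $v$ but degree two in $w$, so under $w\mapsto cw$, $v\mapsto v/c$ the coefficient $a$ scales by $c$; the sign is pinned down not by $v\cdot w=1$ alone but by the standard CCS convention that the components of $w$ in the infected coordinates be nonnegative, which is the choice you implicitly make by taking $w_3>0$. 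This does not affect the validity of your conclusion.
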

 \begin{proof}
 Let's consider $x_{1}=I, x_{2}=B, x_{3}=S$ and  $x=(x_{1},x_{2},x_{3}).$ \\
 
 Now 

 \[   
Infected\_class 
     \begin{cases}
       \dfrac{dx_{1}}{dt} & = \beta x_{2}x_{3}-(\delta+ \mu_{1})x_{1}\\
       
       \dfrac{dx_{2}}{dt} & = \alpha x_{1}-(y+ \mu_{2})x_{2}\\
     \end{cases}
\]
\[   
Uninfected\_class 
     \begin{cases}
       \dfrac{dx_{3}}{dt} & = \omega-\beta x_{2}x_{3}-(\gamma+ \mu_{1})x_{3}\\
       
     \end{cases}
\]

We consider $f(x)=(f_{1},f_{2},f_{3})=\bigg(\dfrac{dx_{1}}{dt},\dfrac{dx_{2}}{dt},\dfrac{dx_{3}}{dt}\bigg)$, hence $f(x)$ is twice differentiable function in $\mathbb{R}^{3}.$

Further we can interpret each $f_{i}$ as

$$ f_{i}(x)= \mathcal{F}_{i}(x)-\mathcal{V}_{i}(x), \ i = 1, 2, 3$$
Where $\mathcal{V}_{i}={V}_{i}^{-}-{V}_{i}^{+}$ and here 
\begin{itemize}
    \item $\mathcal{F}_{i}:=$Appearance rate of new infection in $i^{th}$ compartment 
    \item $\mathcal{V}_{i}^{+}:=$ Transfer rate of individuals \textbf{into} the $i^{th}$ compartment.
    \item $\mathcal{V}_{i}^{-}:=$ Transfer rate of individuals \textbf{out of} the $i^{th}$ compartment.
\end{itemize}
Therefore here
\begin{itemize}
    \item $\mathcal{F}_{1}= \beta x_{2}x_{3} ,\mathcal{V}_{1}^{+}=0, \mathcal{V}_{1}^{-}=(\delta + \mu_{1})x_{1}$
    \item $\mathcal{F}_{2}= \alpha x_{2} ,\mathcal{V}_{2}^{+}=0, \mathcal{V}_{2}^{-}=(y + \mu_{2})x_{2}$
    \item $\mathcal{F}_{3}= 0 ,\mathcal{V}_{3}^{+}=0, \mathcal{V}_{3}^{-}=\beta x_{2}x_{3}+(\gamma + \mu_{1})x_{3}$
\end{itemize}

Denote $\mathcal{X}_{s}$ as the set of all disease free state i.e.
$$\mathcal{X}_{s}:= \{x\in \mathbb{R}^{3}: x_{1}=0,x_{2}=0\}=\bigg\{\bigg(0,0,\frac{\omega}{(\gamma+\mu_{1})}\bigg)\bigg\}$$

Now we will satisfy the condition \textbf{A1 - A5} of \cite{buonomo2015note} as follows \vspace{0.1cm}

\textbf{A1:} All $\mathcal{F}_{i},\mathcal{V}_{i}^{+}$ and $\mathcal{V}_{i}^{-}$ are positive for $i=1,2,3$ in the nonnegative cone $\{x\in\mathbb{R}:x_{i}\geq 0,i=1,2,3\}$\\
\textbf{A2:} If $x\in \mathcal{X}_{s}$ then $\mathcal{V}_{i}^{-}=0$ for the infected compartment, i.e. $i=1,2$. Since for $x\in \mathcal{X}_{s}$ we have $x_{1}=0$ and $x_{2}=0$
$$\implies\mathcal{V}_{1}^{-} = (\delta+\mu_{1}) . 0 = 0 , \mathcal{V}_{2}^{-} = (y+\mu_{2}) . 0 = 0  $$\\
\textbf{A3:} No incidence of infection in uninfected compartment($x_{3}$), that is  $\mathcal{F}_{3}= 0$\\
\textbf{A4:} Disease free subspace is invariant, that means for $x\in \mathcal{X}_{s}$, here $\mathcal{F}_{i}=0,\mathcal{V}_{i}^{+}=0$, $i=1,2$\\
\textbf{A5:} Now putting all $\mathcal{F}_{i}=0$, we have 
$$f(x)=\big(-(\delta+ \mu_{1})x_{1}, -(y+ \mu_{2})x_{2},  \omega-\beta x_{2}x_{3}-(\gamma+ \mu_{1})x_{3}\big)$$
 Now the derivative matrix of $f(x)$ is given by
\begin{eqnarray*}
       \mathcal{D}_{f(x)} &=& \begin{bmatrix}
    -(\delta+\mu_{1}) & 0 & 0  \\
     0 & -(y+\mu_{2}) & 0\\
     0 & \beta x_{3}  & -\beta x_{2} - (\gamma+\mu_{1})
\end{bmatrix}\\
\implies \mathcal{D}_{f(x_{0})} &=& \begin{bmatrix}
-(\delta+\mu_{1}) & 0 & 0  \\
     0 & -(y+\mu_{2}) & 0\\
     0 & \frac{-\beta \omega}{(\gamma + \mu_{1})}  & - (\gamma+\mu_{1})
\end{bmatrix}
\end{eqnarray*}
where $x_{0}\in \mathcal{X}_{s}$, and here $\mathcal{D}_{f(x_{0})}$ is a lower triangular matrix with all negative diagonal entries and hence all the {eigen values} illustrating  that the {disease free equilibrium} is stable in the absence of new infections.
\vspace{0.5cm}

We now show that the following hypothesis \textbf{H1 - H3} of \cite{buonomo2015note}, is also satisfied. \\

\textbf{H1:} The only nonlinear term present in infected compartment of the system is $\mathcal{F}_{1} = \beta  x_{2}x_{3}$\\
\textbf{H2:} Let $T(x_{2},x_{3})=\beta  x_{2}x_{3}$
\begin{itemize}
    \item  $T(k x_{2},x_{3})=\beta k .x_{2}x_{3}= k. \beta  x_{2}x_{3}= k .T(x_{2},x_{3})$ 
    \item $T(x_{2},k x_{3}) = \beta  x_{2}.k x_{3} = k. \beta  x_{2}x_{3}= k .T(x_{2},x_{3}) $
    \item $T(x_{2}+x'_{2},x_{3}) = \beta (x_{2}+x'_{2})x_{3}=\beta x_{2}x_{3}+ \beta x'_{2}x_{3} = T(x_{2},x_{3}) + T(x'_{2},x_{3})$
    \item $T(x_{2},x_{3}+x'_{3}) = \beta x_{2}(x_{3}+x'_{3})=\beta x_{2}x_{3}+ \beta x_{2}x'_{3} = T(x_{2},x_{3}) + T(x_{2},x'_{3})$
\end{itemize}
$\therefore$ The nonlinear term in the above hypothesis (\textbf{H1}) is bilinear in nature.
\vspace{0.1 cm}

\textbf{H3:} There  is no transfer from infected compartment to uninfected compartment. \\

Now using \textit{Proposition-1} in the paper \cite{buonomo2015note} we can conclude that  the system  (\ref{sec2equ1}) - (\ref{sec2equ3})  undergoes a trans-critical bifurcation at  $\mathcal{R}_{0} = 1$  which is forward in nature.
\end{proof}

 \section{Numerical Simulations} \vspace{.2cm}
 
 All the values of the parameters used here are estimated  from different clinical papers. The appropriate references are cited in the Table 1. Some parameters are minimally fine tuned from the Table 1 values to satisfy certain hypothesis assumptions in some of the following plots.
 
 
 \begin{table}[ht!]   
 \centering 
\begin{tabular}{|c|c|c|} 
\hline

\textbf{Symbols} &  \textbf{Values} & \textbf{Units} \\  

\hline\hline
$\omega$ & 0.022 \cite{kim2017schwann} & $day^-1$ \\

\hline\hline
$\beta$ & 3.44 \cite{jin2017formation} & $day^{-1}$  \\

\hline\hline
$\gamma$& 0.1795 \cite{oliveira2005cytokines} & $day^{-1}$ \\

\hline\hline
$\mu_{1}$ & 0.0018 \cite{oliveira2005cytokines} & $day^{-1}$ \\

\hline\hline
$\delta$ & 0.2681 \cite{oliveira2005cytokines} & $day^{-1}$\\

\hline\hline
$\alpha$ & 0.063 \cite{levy2006mouse} & $day^{-1}$ \\

\hline\hline
$y$ & $0.0003$ \cite{ghosh2021mathematical}& $day^{-1}$   \\
\hline\hline
$\mu_{2}$ & 0.57 \cite{international2020international}& $day^{-1}$\\
\hline

\end{tabular}
\caption{Values of the parameters complied from clinical literature }
\label{Table:1}
\end{table} 

\subsection{{\bf{Disease free equilibrium  \texorpdfstring{$E_{0}$}{TEXT}}}}

\hspace{0.5 in}
\hspace{0.5 in} We now depict the local and global stability of the disease free equilibrium $E_{0}.$ Figures 1a and 1b depict the local and global stability of $E_{0}.$ 

 \hspace{0.5 in} We choose parameters  in Table \ref{Table:2} in such a way that $\mathcal{R}_{0}=0.9939 < 1$ and for these parameters we have  $E_{0} = (55.1899,0,0)$. For depicting the global stability of  $E_{0}$ we have arbitrarily considered the solution trajectories taking ten different initial conditions.  
 
\begin{table}[ht!]   
 \centering 
\begin{tabular}{|c|c|c|c|c|c|c|c|} 
\hline

 $\omega$ & $\beta$ & $\gamma$ & $\mu_{1}$ & $\delta$ & $\alpha$ & $y$ & $\mu_{2}$ \\
\hline\hline
  1.090 & 0.44 & 0.01795 & 0.0018 & 0.2681 & 0.0063 & 0.0003 & 0.57 \\
 \hline
\end{tabular}
\caption{Values of the parameters taken for $E_{0}$ }
\label{Table:2}
\end{table}

\begin{figure} 
    \begin{subfigure}{0.45\textwidth}
        \includegraphics[width=\textwidth]{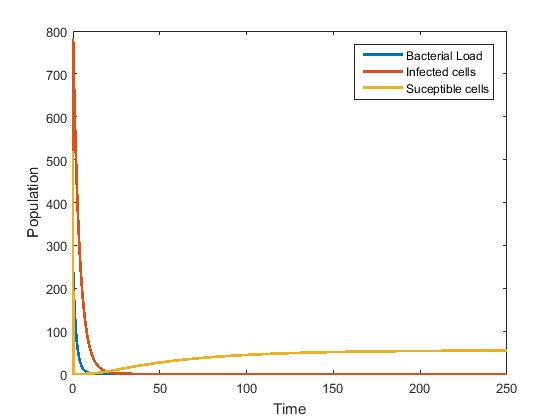}
        \caption{ }
        \label{1a}
    \end{subfigure}
    \begin{subfigure}{0.45\textwidth}
        \includegraphics[width=\textwidth]{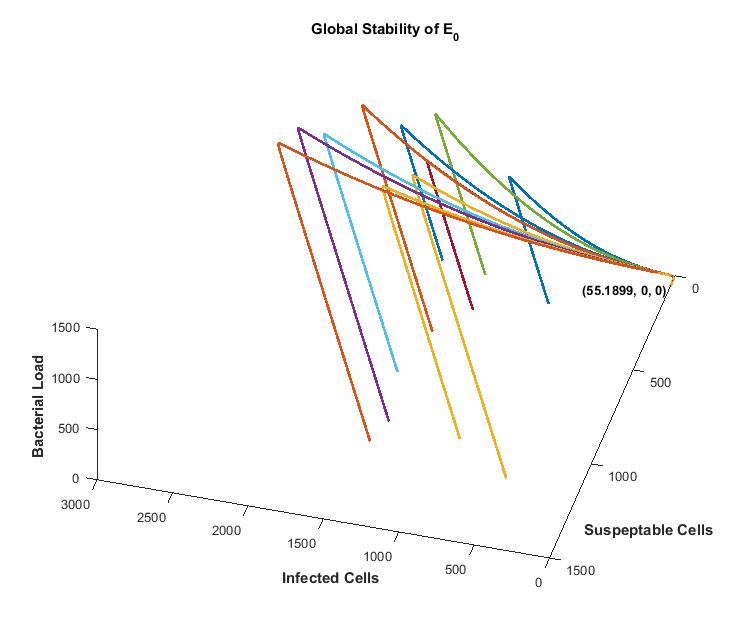}
        \caption{ }
        \label{1b}
    \end{subfigure}
    
    \caption{Local and global Stability of the system (\ref{sec2equ1}) - (\ref{sec2equ3})  at $E_{0}$ }
    \label{E0}
\end{figure}

\subsection{{\bf{ Infected/endemic equilibrium \texorpdfstring{$E^{*}$}{TEXT}}}} 

\hspace{0.5 in} We now depict the local and global stability of the endemic equilibrium $E^{*}.$ Figures 2a and 2b depict the local and global stability of $E^{*}.$

\hspace{0.5 in}  For the  numerical simulations we have chosen the values of parameters as in   Table \ref{Table:3}.  For these parameter values we have $\mathcal{R}_{0}  = 29.6341 > 1$ and the $E^{*} = (38.9006, 75.2748, 17.3046)$.  For depicting the global stability of  $E^{*}$ we have arbitrarily considered  solution  trajectories with  different initial conditions.

\begin{table}[ht!]   
 \centering 
\begin{tabular}{|c|c|c|c|c|c|c|c|} 
\hline

 $\omega$ & $\beta$ & $\gamma$ & $\mu_{1}$ & $\delta$ & $\alpha$ & $y$ & $\mu_{2}$ \\
\hline\hline
  20.90 & 0.030 & 0.01795 & 0.00018 & 0.2681 & 00.2 & 0.3 & 0.57 \\
 \hline
\end{tabular}
\caption{Values of the parameters taken for $E^{*}$ }
\label{Table:3}
\end{table} 

\begin{figure} 
    \begin{subfigure}{0.45\textwidth}
        \includegraphics[width=\textwidth]{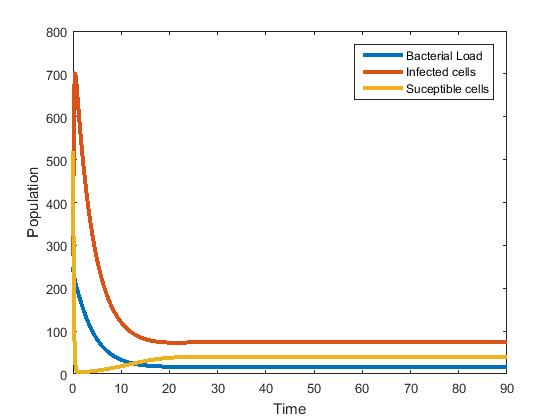}
        \caption{ }
        \label{2a}
    \end{subfigure}
    \begin{subfigure}{0.45\textwidth}
        \includegraphics[width=\textwidth]{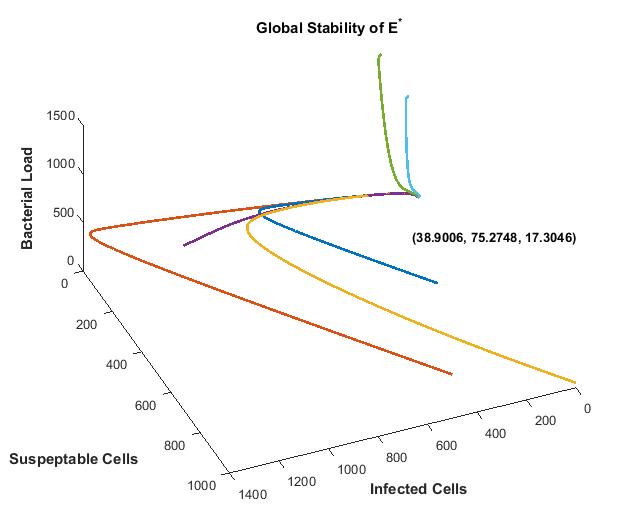}
        \caption{ }
        \label{2b}
    \end{subfigure}
    
    \caption{Local and global stability of the system (\ref{sec2equ1}) - (\ref{sec2equ3})  at $E^*$.}
    \label{fig:2}
\end{figure}

\subsection{{\bf{Transcritical Bifurcation}}}

\hspace{0.5 in} In this bifurcation there is an exchange of stability between $E_{0}$ and $E^{*}$ as $\mathcal{R}_{0}$ crosses unity. To  depict this bifurcation, we varied the parameter $\omega$ from $0$ to $0.25$ with step size $0.001$ and chose the other parameters from Table \ref{Table:1}.  The Figure  \ref{tcbf} depicts the occurrence of Transcritical bifurcation at $\mathcal{R}_{0} = 1$ . 

\begin{figure}[htt!]
    \centering
    \includegraphics[height = 6cm, width =7.5cm]{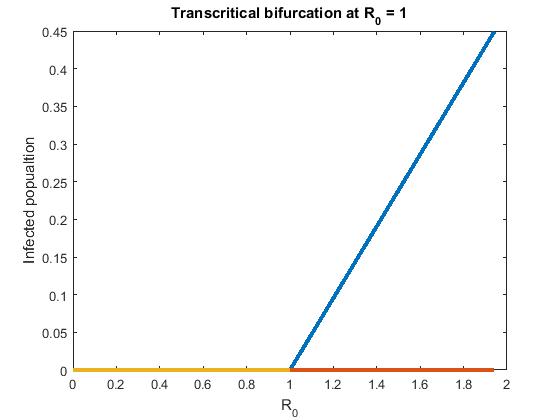}
    \caption{Figure depicting the transcritical bifurcation exhibited by the system (\ref{sec2equ1}) - (\ref{sec2equ3})  at  $\mathcal{R}_{0} = 1$}
    \label{tcbf}
\end{figure} \vspace{.2cm}

\newpage

\section{Model validation through 2D Heat Plots}  \vspace{.2cm}

\hspace{0.5 in}  Form some of the clinical studies we see that the average doubling time of the \textit{M. Leprae} is approximately \textit{14 days} \cite{pinheiro2011mycobacterium}. Based on this  characteristic, we now validate the model (\ref{sec2equ1}) - (\ref{sec2equ3}) through 2D heat plot. 

\hspace{0.5 in} We now vary the parameters $\alpha$ from $0.2263$ to $0.3099$ on the $x - axis$ and the parameter $\gamma$ between  $0.15$ to $0.2090$ on the $y - axis$ and generate a two parameter heat plot to validate our model (\ref{sec2equ1}) - (\ref{sec2equ3}). All other parameter are taken from Table \ref{Table:1} and the initial condition was chosen to be $(S_{0},I_{0},B_{0}) = (5200,0,40)$.

\begin{figure}[htt!]
    \centering
    \includegraphics[height = 7cm, width =9cm]{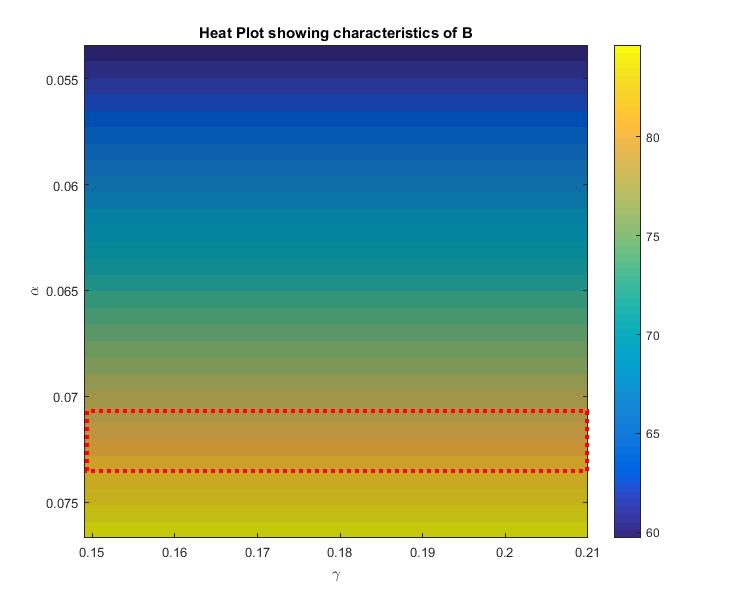}
    \caption{}
    \label{HP}
\end{figure}
\hspace{0.5 in} Now from the Figure \ref{HP} it can be  seen that the proposed model is able to reproduce characteristic, i.e. exactly the double of initial count of bacterial load that is $80$ ($B_{0} = 40$), indicated by the dotted red rectangle. \\

\section{Sensitivity Analysis} \vspace{.2cm}

\hspace{0.5 in} Here we are interested in investigating the impact of uncertainty in the values of the different parameters on the variables ($S, I, B$). For this we have used the Global Sensitivity Analysis (GSA) methodology through {Latin hyper cube sampling} (LHS). LHS is a technique that involves
sampling without replacement a set of model parameter combinations from preset ranges on the parameter values \cite{cho2003experimental, marino2008methodology, zhang2015sobol}. Using this sample we generate the scatter plot to decide the methodology for GSA.  The scatter plots enables the graphical detection of the non-linearities, non-monotonicities  between model input (parameter) and output (variables). If the trend is non-linear then rank correlation coefficient such as  {Partial Rank Correlation Coefficient} (PRCC) , {Spearman's Rank Correlation Coefficient} (SRCC) will be used for further sensitivity analysis where as if the trend is non-monotonic, method based on decomposition of model output variance such as  {Sobol's method} will be the best choice for   further analysis.

\subsection{{\bf{LHS and Scatter plots} }} \vspace{.2cm}

\hspace{0.5 in} As an initial step to LHS we select the following parameters listed in Table \ref{lhs} having possible uncertainty in their values and consider them for the process of sampling. The range of the variable values used for sampling is listed in Table \ref{lhs}. All the parameter value ranges are chosen based on the clinical papers  \cite{ levy2006mouse, oliveira2005cytokines} and we introduced an uncertainty in $y$ for our computational convenience. The remaining values of the parameters are as in Table  \ref{Table:1}. \\

\begin{table}[ht!]   
\centering 
\begin{tabular}{|c|c|c|} 
\hline

\textbf{Parameter} &  \textbf{Max Value} & \textbf{Min Value} \\ 

\hline\hline
$\gamma$& 0.0763 & 0.0538 \\

\hline\hline
$\mu_{1}$ & 0.0405 & 0.0305 \\

\hline\hline
$\delta$ & 0.3099 & 0.2263\\

\hline\hline
$\alpha$ & 0.0763 & 0.0538 \\

\hline\hline
$y$ & 0.0001 & 0.0005   \\
\hline

\end{tabular}
\caption{Range of sensitive parameters }
\label{lhs}
\end{table}

\hspace{0.5 in}Then the LHS is done to create $1000$ sets of parameter sample each containing $5$ random values of parameters. Now each set of these parameters was used to simulate the model at each time. Scatter plots were created for each parameter vs variable to decide the further procedure of GSA.        
\begin{figure}[ht!]
    \centering
    \includegraphics[height = 7cm, width =16cm]{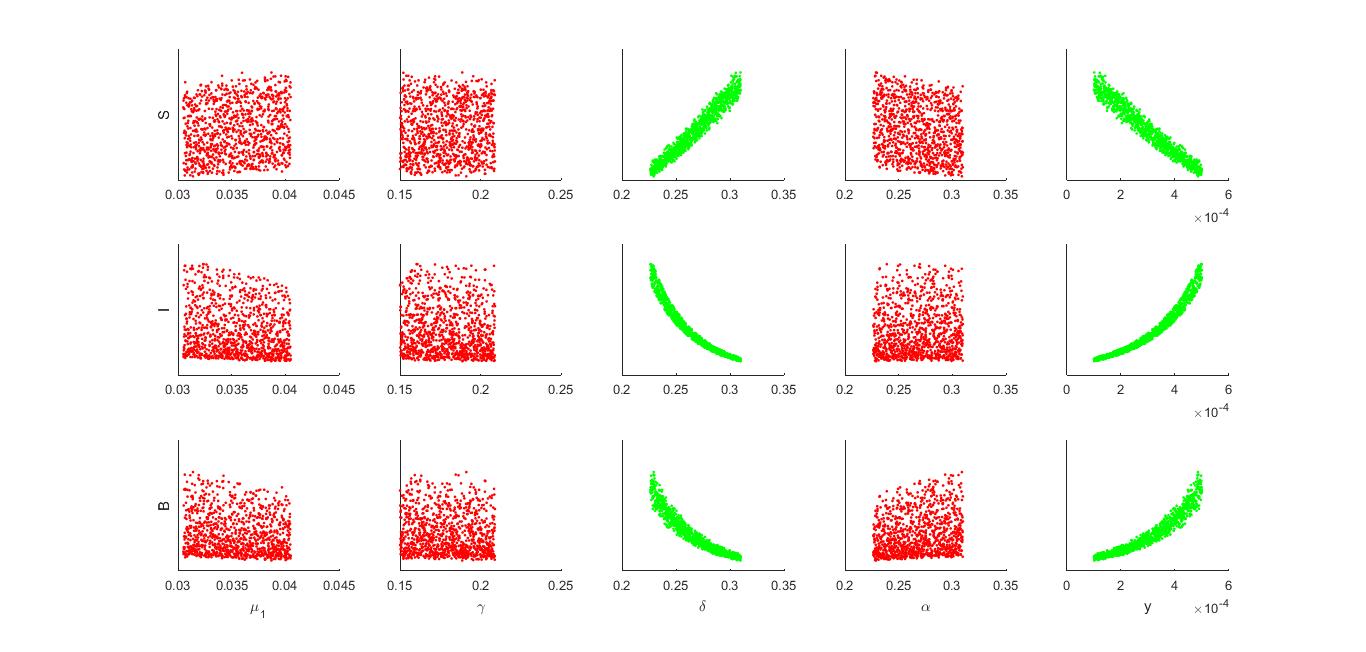}
    \caption{Scatter plots for  parameters vs variables such as $S(t),   I(t) \  \text{and}  \ B(t)$  }
    \label{SP}
\end{figure}

\hspace{0.5 in} In the {Figure} \ref{SP} we can easily see that the relationships between $\delta$ and all variables such as $S(t),   I(t) \  \text{and}  \ B(t)$ follow a monotonic trend and the so is the case for $y$. Therefore we did the SRCC and PRCC for these two variable and the remaining parameters were analysed by calculating the Sobol's index. \\

\subsection{{\bf{SRCC and PRCC}}}

 \hspace{0.5 in} Using the same sample obtained above, we calculated SRCC index   separately for $\delta $ and $y$ and  the PRCC index jointly.
 
 \begin{figure}[htt!]
    \centering
    \includegraphics[height = 8cm, width =16cm]{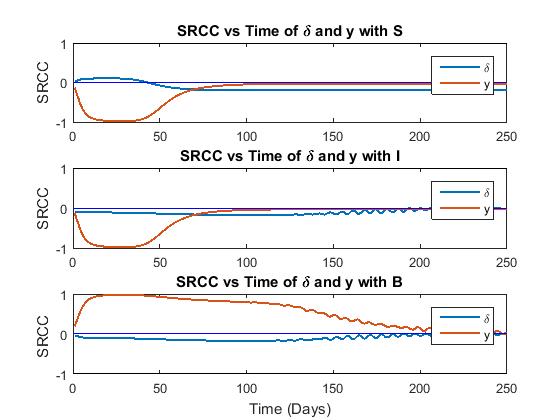}
    \caption{Plot for SRCC with respect to time  }
    \label{srcc}
\end{figure}
\begin{figure}[htt!]
    \centering
    \includegraphics[height = 7cm, width =16cm]{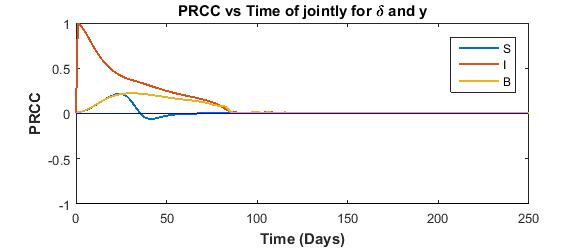}
    \caption{Plot for PRCC with respect to time  }
    \label{prcc}
\end{figure}

\newpage
\hspace{0.5 in}The {Figure} \ref{srcc} shows that $\delta$ has more negative impact on $S$ and $I$ in comparison to  $y$, whereas $y$ has more positive impact on $B$. The $PRCC$ plot  \ref{prcc} shows that the cummulative impact of $\delta$ and $y$ seems to be more on the Infected cell population $I.$

\newpage

\subsection{{\bf{Sobol's Index}}} \vspace{.2cm}

\hspace{0.5 in} The Sobol's index is caluculated using the formula of correlation \cite{saltelli2008global}.
$$S_{i}=Corr(Y,E(Y/x_{i}))$$
where $S_{i}$ is the Sobol's index of $i^{th}$ parameter, $Y$ is the model out put value and  $E(Y/x_{i})$ conditional expectation/ mean of model output Y.\\

\hspace{0.5in} The Sobol's index was calculated for the parameters $\mu_1, \gamma, \alpha$ at each time as in SRCC and was plotted separately for the model variables $S, I and B$ which can be seen in Figure \ref{sbl}. Unfortunately from these plots  any we couldn't derive fruitful conclusions to decide the most sensitive parameter owing to the high fluctuations.  \\

\hspace{0.5in} Because of the above limitation we tried to identify the sensitive parameters with respect to $\mathcal{R}_{0}$ which is discussed in next section. \\


\begin{figure} 
\centering
    \begin{subfigure}{0.49\textwidth}
        \includegraphics[width=\textwidth]{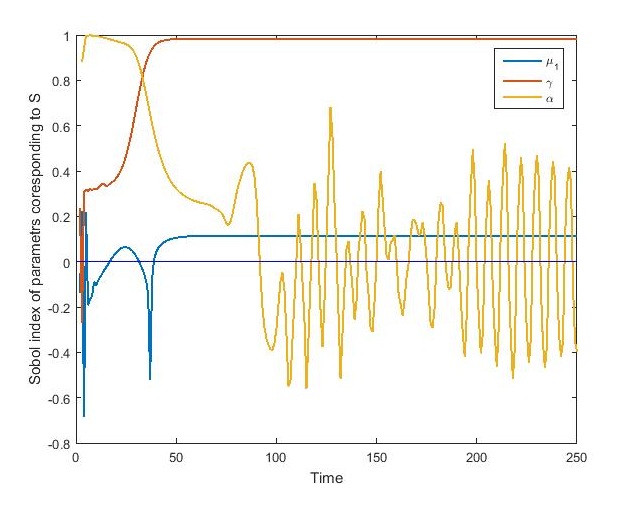}
        \caption{ }
        \label{4a}
    \end{subfigure}

    \begin{subfigure}{0.5\linewidth}
        \includegraphics[width=\textwidth]{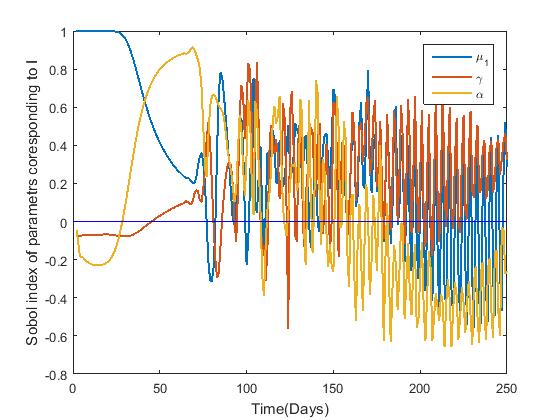}
        \caption{ }
        \label{4b}
    \end{subfigure}

    \begin{subfigure}{0.5\linewidth}
        \includegraphics[width=\textwidth]{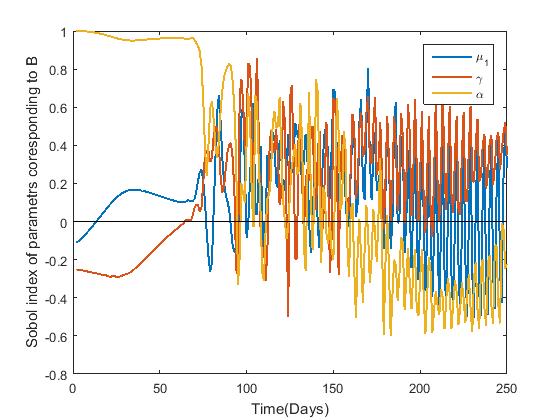}
        \caption{ }
        \label{4c}
    \end{subfigure}
    
    \caption{Plot of Sobol's index  of $\mu_{1},$ $\gamma$ and $\alpha$ with $S, I$ and $B$ respectively}
    \label{sbl}
\end{figure}

\subsection{{\bf{Sensitivity of \texorpdfstring{$\mathcal{R}_{0}$}{TEXT}} }} \vspace{.2cm}

\hspace{0.5 in} For identifying the sensitive parameters with respect to ${\mathcal{R}_{0}},$ we  did the scatter plots of the parameters against $\mathcal{R}_{0}$ and saw that none of them were qualified for PRCC analysis. Hence we calculated the {Sobol's sensitivity index} for each parameter and pairs of parameters as listed in frames of the plot \ref{sblR0}.

\begin{figure} 
    \begin{subfigure}{0.45\textwidth}
        \includegraphics[width=\textwidth]{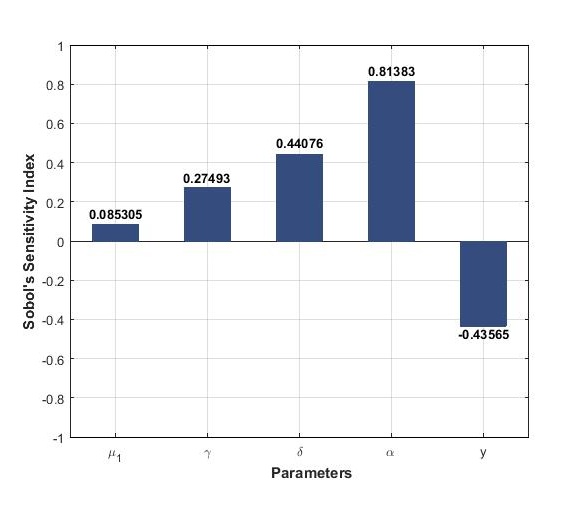}
        \caption{ }
        \label{9a}
    \end{subfigure}
    \begin{subfigure}{0.45\textwidth}
         \includegraphics[width=\textwidth]{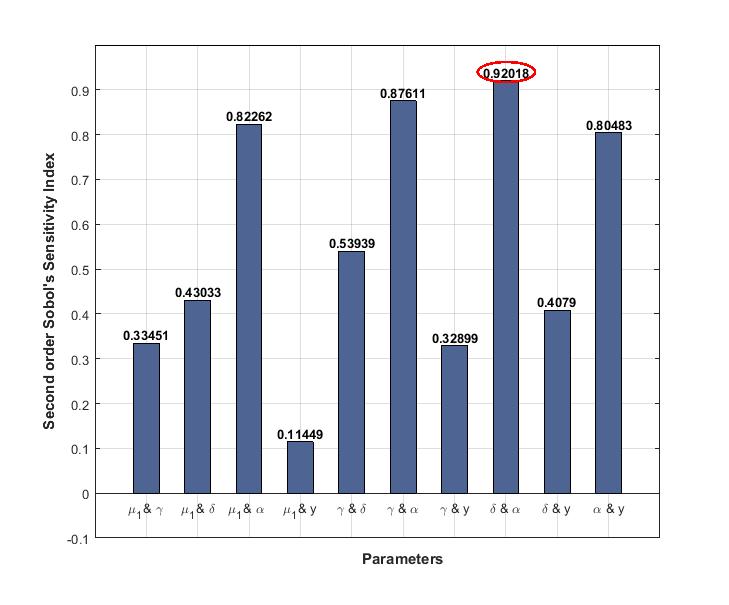}
        \caption{ }
        \label{9b}
    \end{subfigure}
    
    \caption{Sensitivity through Sobol's Index with respect to  $\mathcal{R}_{0}$ as  output}
    \label{sblR0}
\end{figure}

\subsection{{\bf{Inference} }}
\hspace{0.5 in} From the above sensitivity analysis  we can conclude that $\alpha$ is the most sensitive parameter followed by $\delta$ and $y$. Here $\alpha$ and $\delta$ have direct impact on the system where as $y$ has inverse impact on the system as it has a negative Sobol's index. In case of cumulative parameter sensitivity we see  that the parameter combination of $\alpha$ and $\delta$ is the most sensitive combination that impacts the system (\ref{sec2equ1}) - (\ref{sec2equ3}).

\newpage

\section{Optimal Control Studies} \vspace{.2cm}

     \hspace{0.5 in} Presently for the type - I  lepra reaction  two  kinds of medication are prescribed based on the disease condition  \cite{maymone2020leprosy, walker2008leprosy}.  
     Firstly {Multi Drug Therapy} (MDT) is used and in case still the reaction burden doesn't reduce, then  steroids are given along with MDT treatment.

        \hspace{0.45 in} Motivated by the above clinical findings in this section we frame and study two optimal control problems. First one deals with the optimal drug regimen for MDT and the second deals with the optimal drug regimen for the scenario involving both MDT and steroid interventions.  These medical/drug interventions are modeled as control variables for the system (\ref{sec2equ1}) - (\ref{sec2equ3}).
      
\subsection{{\bf{Optimal control problem associated with MDT }}} \vspace{.2cm}

    \hspace{0.5 in} According to the WHO recommended guidelines of 2018 for Leprosy MDT consist of three drugs \textit{Rifampin, Dapsone }and \textit{Clofazimine} \cite{maymone2020leprosy, tripathi2013essentials}. The drug {rifampin} acts  as a  rapid bacillary killer and thereby indirectly reduces the amount of cells getting infected. Therefore the control variable $D_{12}(t)$ is negatively incorporated   in the infected cell compartment of (\ref{sec11equ1}) -  (\ref{sec11equ3}) and $D_{13}^{2}(t)$ is negatively incorporated  in the bacterial load compartment of (\ref{sec11equ1}) -  (\ref{sec11equ3}). Here the square on $D_{13}(t)$ is used for capturing the extent of intense action of this drug on bacterial load. The drug {dapsone} is bactericidal and
     bacteriostatic against \textit{M. leprae} and it also has some adverse effect of nerve damage due to the cytokines responses \cite{paniker2001dapsone}. To capture this action of the drug  we incorporate  $D_{21}(t)$ and $D_{22}(t)$  in the compartments $S$ and $I$ of (\ref{sec11equ1}) -  (\ref{sec11equ3}) and  $D_{23}^2(t)$ in the $B$ compartment of (\ref{sec11equ1}) -  (\ref{sec11equ3}). The third drug {clofazimine} has an immuno-suppressive effect and also it binds with DNA of the bacteria causing the inhibition of template function of DNA resulting bacteriostatic against \textit{M. leprae } \cite{garrelts1991clofazimine}. To incorporate this phenomenon we add the control variable $D_{31}(t)$ to the $S$ compartment in (\ref{sec11equ1}) -  (\ref{sec11equ3}) resulting increase of these cells. $D_{33}(t)$ is negatively incorporated in the $B$ compartment of (\ref{sec11equ1}) -  (\ref{sec11equ3}) to indicate the inhibition of bacterial replication. 
     
     \hspace{0.5 in} Now mathematically  we define the set of all  control variables as follows:
     
     $$U=\Big\{D_{ij}(t),D_{ij}(t)\in[0,D_{ij}max], 1\leq i,j \leq 3,\ ij \neq 32, \ t\in[0,T]\Big\}$$
     
    \hspace{0.5 in} Here $D_{ij}max$ represents the maximum value of the corresponding  control variable which depends on the availability and limit of the drugs recommended for patients   and $T$ is the final time of observation.
     
     \hspace{0.5 in}  Since the drugs used  in MDT can be toxic and can lead to side effects for solving this optimal control problem we consider a cost functional that minimizes the drug concentrations along with the infected cell count and bacterial load. Based on this we consider the following cost functional:
     
     \begin{equation}
         \mathcal{J}_{min}\big(D_{1},D_{2},D_{3}\big)= \int_{0}^{T} \Big(I(t) + B(t)+P[D^{2}_{11}(t)+D^{2}_{12}(t)+D^{3}_{13}(t)]+Q[|D_{2}|^{2}] + R[|D_{3}|^{2}]\Big) dt 
         \label{opti}
     \end{equation}
            
  subject to the constraints/system 
  
    \begin{eqnarray}
   	\frac{dS}{dt}& =&  \omega \ - \beta SB  - \gamma S - \mu_{1} S -D_{11}(t)S - D_{21}(t)S+D_{31}(t)S\label{sec11equ1} \\
   	\frac{dI}{dt} &=& \beta SB \ -\delta I - \mu_{1} I-D_{12}(t)I - D_{22}(t)I  \label{sec11equ2}\\ 
   	\frac{dB}{dt} &=&  \big(\alpha-D_{23}^2(t)-D_{33}(t)\big) I  \ - y B    -  \mu_{2} B-D_{13}^2(t)B \label{sec11equ3}
   \end{eqnarray} 
   
      Here $D_{1}=\big(D_{11},D_{12},D_{13} \big)$, $D_{2}=\big(D_{21},D_{22},D_{23} \big)$, $D_{3}=\big(D_{31},D_{33} \big)$ and $|\bullet |$ represents standard \textit{Euclidean} norm in $\mathbb{R}^{n}$ and $(D_{1},D_{2},D_{3})\in U.$
   
The integrand  of the cost function \ref{opti}, denoted by 
\begin{equation}
    L\big(I,V, D_1,D_2,D_3\big) = \Big(I(t) + B(t)+P[D^{2}_{11}(t)+D^{2}_{12}(t)+D^{3}_{13}(t)]+Q[|D_{2}|^{2}] + R[|D_{3}|^{2}]\Big)
\end{equation}
 is the {lagrangian} or {running cost} of the optimal control problem. \\
 
The admissible set of solutions for the above optimal control problem (\ref{opti}) - (\ref{sec11equ3}) is given by
$$\Omega =\Big\{\big(I,V,D_1,D_2,D_3\big): I, V satisfying  \hspace{0.1in} (\ref{sec11equ1})-(\ref{sec11equ3})  \ \forall \ (D_1,D_2,D_3) \in U \Big\}$$\\


{\subsection{\bf{Existence of optimal solution}}} \vspace{.2cm}

\hspace{0.5 in} In the section  we   establish the existence of optimal control for the system (\ref{opti}) - (\ref{sec11equ3}) using the existence theorem 2.2 of \cite{boyarsky1976existence} dealing with nonlinear control systems.

\begin{theorem}
There exists a $8$- tuple of optimal controls  $\big(D_{1}^*(t),D_{2}^*(t),D_{3}^*(t)\big)$ in the set of admissible controls $U$ such that the cost function is minimized i.e.
$$ \mathcal{J}\big(D_{1}^*(t),D_{2}^*(t),D_{3}^*(t)\big)=\min_{D_1,D_2,D_3\in U} \big\{ \mathcal{J} \big(D_{1},D_{2},D_{3}\big)\big\}$$
corresponding to the control system  (\ref{opti}) - (\ref{sec11equ3}), where $D_{1}=\big(D_{11},D_{12},D_{13} \big)$, $D_{2}=\big(D_{21},D_{22},D_{23} \big)$, $D_{3}=\big(D_{31},D_{33} \big)$ .
\end{theorem}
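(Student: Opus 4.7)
The plan is to verify the hypotheses of Theorem 2.2 of \cite{boyarsky1976existence} directly. The standard checklist for such a theorem reduces to five items: (a) the admissible set $\Omega$ is nonempty; (b) the control set $U$ is closed and convex; (c) the right-hand side of the state system is continuous and grows at most linearly in $(S,I,B)$ uniformly over admissible controls; (d) the integrand $L$ is convex in $(D_1,D_2,D_3)$ for each fixed state; and (e) there exist constants $c_1>0$, $c_2\ge 0$ and $\rho>1$ with $L \geq c_1 \,|(D_1,D_2,D_3)|^\rho - c_2$. I would organise the proof as a short sequence of verifications in exactly this order.

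For (a), I would fix any admissible tuple in $U$ (say the identically zero tuple) and invoke the existence/uniqueness theorem of Section 3.2 applied to the augmented system; since the right-hand side of (\ref{sec11equ1})--(\ref{sec11equ3}) is polynomial in the states and continuous in $t$ through bounded controls, it is locally Lipschitz and a unique solution exists on $[0,T]$. For (b), $U$ is a Cartesian product of closed intervals $[0,D_{ij}^{\max}]$, hence compact and convex. For (c), I would reproduce the Boundedness argument of Theorem 2 on the controlled system, absorbing $-D_{11}(t)S-D_{21}(t)S+D_{31}(t)S$ and $-D_{12}(t)I-D_{22}(t)I$ into modified linear dissipation coefficients and using Gronwall on $[0,T]$, so that a priori bounds $S\le S_{\max}$, $I\le I_{\max}$, $B\le B_{\max}$ hold uniformly in the controls; the bilinear terms $\beta SB$, $D_{13}^2 B$, $D_{23}^2 I$ are then dominated by a linear function of the states. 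For (d), note that $L$ is affine in the states $I,B$ and splits in the controls into a sum of nonnegative quadratic terms $PD_{11}^2+PD_{12}^2+Q|D_2|^2+R|D_3|^2$ plus the cubic $PD_{13}^3$; the quadratics are convex, and $x\mapsto x^3$ is convex and nonnegative on $[0,D_{13}^{\max}]\subset[0,\infty)$, so $L$ is convex in the control variables. For (e), discarding the nonnegative cubic and the nonnegative state terms gives $L \geq \min\{P,Q,R\}\,|(D_1,D_2,D_3)|^2$, so take $\rho=2$, $c_1=\min\{P,Q,R\}>0$, $c_2=0$.

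The main obstacle I anticipate is item (c): the controls enter multiplicatively with the states, so the state equations are not globally Lipschitz in $(S,I,B)$ uniformly in the controls, and one has to confirm that the a priori state bounds of Theorem 2 survive the addition of the \emph{positive} feedback term $+D_{31}(t)S$ in the $S$-equation. This is handled by either imposing the mild assumption $D_{31}^{\max}<\gamma+\mu_1$ (keeping the dissipation in $S+I$ strictly positive so the original $k$-argument carries over) or, without that assumption, by running Gronwall's inequality on the finite horizon $[0,T]$ to obtain a $T$-dependent but still finite upper bound. Once the bounds are in place, the required linear-growth estimate on the right-hand side follows, and Boyarsky's theorem produces the minimizing $8$-tuple $(D_1^*,D_2^*,D_3^*)\in U$.
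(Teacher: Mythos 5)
Your proposal does not follow the paper's argument. Although you name Theorem 2.2 of the Boyarsky reference, the checklist you actually verify is the classical Fleming--Rishel/Cesari one (nonempty admissible set, compact convex $U$, a priori state bounds, convexity of the integrand in the controls, coercivity). The paper instead verifies Boyarsky's own hypotheses: conditions (F1)--(F3), which require exhibiting auxiliary functions $g_i$ of the controls (e.g.\ $g_1=-D_{11}-D_{21}+D_{31}$, $g_3=-D_{23}^2-D_{33}$) so that the increment of each $f^i$ in the control argument is dominated by $F_i(t,x)$ times the increment of $g_i$, with $g_i(U)$ compact and convex and $F_i(\cdot,x)\in\mathcal{L}_1$; and conditions (C1)--(C5) on the running cost (continuity, measurability, a lower bound $\Psi$, convexity on $U$, and monotonicity). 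So the two proofs are structurally different even though they cite the same theorem; yours is the more standard textbook route, the paper's is tied to the specific separated structure of the control dependence.

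There is, however, a genuine gap in your route. The controlled dynamics (\ref{sec11equ1})--(\ref{sec11equ3}) are \emph{not} affine in the controls: $D_{13}$ and $D_{23}$ enter the $B$-equation quadratically through $-D_{13}^2(t)B$ and $-D_{23}^2(t)I$. For a Filippov/Cesari-type existence theorem, convexity of $L$ in $(D_1,D_2,D_3)$ for fixed state (your item (d)) is not the right condition when the velocity field is nonlinear in the control; what is needed is convexity of the extended velocity set $\mathcal{F}(t,x)=\{(f(t,x,D),\,L(t,x,D)+\gamma):D\in U,\ \gamma\ge 0\}$, so that weak limits of minimizing sequences remain admissible. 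You never check this. It is plausibly true here (each of $D_{13},D_{23}$ enters only one state equation and does so monotonically on $[0,D_{ij}^{\max}]$, and the corresponding cost contribution is a convex function of the resulting velocity component), but that verification is the crux of applying your chosen theorem to this non-affine system and cannot be omitted. Separately, your coercivity bound in (e) is false as stated: the cost carries $P\,D_{13}^{3}$, not $P\,D_{13}^{2}$, and $D_{13}^{3}<D_{13}^{2}$ for $D_{13}\in(0,1)$, so $L\ge\min\{P,Q,R\}\,|(D_1,D_2,D_3)|^{2}$ fails; this is harmless only because $U$ is compact and coercivity is then not needed, but you should drop the claim rather than assert it. Your observation about the destabilizing term $+D_{31}S$ and the finite-horizon Gronwall fix for item (c) is sound and is actually more careful than anything in the paper on that point.
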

\begin{proof}
Let's consider that $\frac{dS}{dt}=f^{1}(t,x,D)$, $\frac{dI}{dt}=f^{2}(t,x,D)$ and
$\frac{dB}{dt}=f^{3}(t,x,D)$ of the control system  (\ref{opti}) - (\ref{sec11equ3}). Here $x\in
X$ denotes the state variables $(S, I, B)$ and $D$ denote $8$-tuple control variables. We take 
$f=\big(f^1,f^2,f^3\big)$, then clearly $X\subset \mathbb{R}^3$ and
$$f:[0,T]\times X \times U \to  \mathbb{R}^3$$
is a continuous function of $t$ and $x$ for each $D_{ij} \in U$. Now  we have to show $(F1)-(F3)$
of  \textit{Theorem 2.2} of \cite{boyarsky1976existence} hold true.\\

 \textbf{F1:} Here each $f^i$'s have the continuous and bounded partial derivatives which imply that the $f$ is  Lipschitz's continuous.\\
 
 \textbf{F2:} We  consider $g_1(D_{11},D_{21},D_{31}) =-D_{11}-D_{12}+D_{31} $ , which is bounded on $U$. Thus 
 \begin{equation*}
  \begin{aligned}
  \frac{ f^1(t,x,D^{(1)})- f^1(t,x,D^{(2)})}{\big[g_1(D^{(1)})-g_1(D^{(2)})\big]}
 &= \frac{\big[D^{(2)}_{11}+D^{(2)}_{12}-D^{(2)}_{31}-D^{(1)}_{11}-D^{(1)}_{12}+D^{(1)}_{31}\big]S}{\big[D^{(2)}_{11}+D^{(2)}_{12}-D^{(2)}_{31}-D^{(1)}_{11}-D^{(1)}_{12}+D^{(1)}_{31}\big]}\\
   & \leq \eta S = F_1(t,x)\\
 \therefore  f^1(t,x,D^{(1)})- f^1(t,x,D^{(2)}) &\leq F_1(t,x)\bullet \big[g_1(D^{(1)})-g_1(D^{(2)})\big]
  \end{aligned}
\end{equation*}
 Here $\eta > 1$ is a real number. Moreover since $U$ compact and $g_1$ is continuous  we have $g_{1}(U)$ to be compact. Also since the function  $g_{1}(U)$ is linear so the range of $g_{1}$ i.e. $g_{1}(U)$ will be convex.  Since $U$ is non-negative so $g^{-1}_{1}$ is non-negative. \\

 Similarly for $f^{2}(t,x,D)$ we can choose $g_2(D_{12},D_{22}) = -D_{12}-D_{22}$ and $F_2(t,x)= I$ and prove  F2  in a similar way.\\

 Now for $f^{3}(t,x,D)$ we have to choose $g_3(D_{23},D_{33}) = -D^{2}_{23}-D_{33} $ 
 
\begin{equation*}
  \begin{aligned}
  \frac{ f^2(t,x,D^{(1)})- f^2(t,x,D^{(2)})}{\big[g_2(D^{(1)})-g_2(D^{(2)})\big]}
   &= \frac{\big[D^{2(2)}_{23}+D^{(2)}_{33}-D^{2(1)}_{23}-D^{(1)}_{33}\big]I-[D_{13}^{2(1)}-D_{13}^{2(2)}]B}{\big[D^{2(2)}_{23}+D^{(2)}_{33}-D^{2(1)}_{23}-D^{(1)}_{33}\big]}\\
   & \leq \frac{\big[D^{2(2)}_{23}+D^{(2)}_{33}-D^{2(1)}_{23}-D^{(1)}_{33}\big]I}{\big[D^{2(2)}_{23}+D^{(2)}_{33}-D^{2(1)}_{23}-D^{(1)}_{33}\big]}=I=F_3(t,x)  \ \bigg(\text{provided} \ D^{2(1)}_{13} \geq D^{2(2)}_{13} \bigg)\\
 \therefore  f^3(t,x,D^{(1)})- f^3(t,x,D^{(2)}) &\leq F_3(t,x)\bullet \big[g_3(D^{(1)})-g_3(D^{(2)})\big]
  \end{aligned}
\end{equation*}
 
 \textbf{F3:}  Since $S,I,B$ are bounded on $[0,T]$  so $F(\bullet,x^{u}) \in \mathcal{L}_{1}$
 
 Now we have to show that the running cost function 
 $$C(t,x,D)=I(t) + B(t)+P\big[D^{2}_{11}(t)+D^{2}_{12}(t)+D^{3}_{13}(t)\big]
   +Q\big[D^{2}_{21}(t)+D^{2}_{22}(t)+D^{3}_{23}(t)\big] + R\big[|D_{3}(t)|^{2}\big]$$
satisfies the  conditions $(C1)-(C5)$ of  \textit{Theorem 2.2} of \cite{boyarsky1976existence}. Here $C:[0,T]\times X \times U \to \mathbb{R}$\\

\textbf{C1:} Here  $C(t,\bullet,\bullet)$ is a continuous function  as it is sum of continuous functions which are functions of $t\in [0,T]$.\\
\textbf{C2:} Since a $S,I$ and $B$  and all $D_{ij}$'s are bounded implying that $C(\bullet,x,D)$ is bounded and hence measurable for each $x\in X$ and $D_{ij} \in U$.\\ 
\textbf{C3:} Consider $\Psi(t) = \kappa$ such that $ \kappa = \min \{I(0),B(0)\} $ then $\Psi$ will bounded such that for all $t\in [0,T]$, $x \in X$ and $D_{ij}\in U$ we have 
$$C(t,x,D)\geq \Psi(t)$$
\textbf{C4:} Since $C(t,x,D)$ is sum of the function  which are convex in $U$ for each fixed $(t,x)\in [0,T]\times X $  therefore $C(t,x,D)$ follows the same.\\
\textbf{C5:} Using similar type of argument we can easily shoe that for each fixed $(t,x)\in [0,T]\times X $, $C(t,x,D)$ is a monotonically increasing function.

Hence we have shown that the optimal control problem satisfies the all hypothesis of the \textit{Theorem 2.2} of \cite{boyarsky1976existence}.
Therefore there exists a $8$- tuple of optimal controls  $\big(D_{1}^*(t),D_{2}^*(t),D_{3}^*(t)\big)$ in the set of admissible controls $U$ such that the cost function is minimized.
\end{proof}

{\subsection{\bf{Characteristics for the optimal control}}} \vspace{.2cm}

 In this section we obtain the characteristics of the optimal control using the 
  \textit{Pontryagin’s Maximum Principle}  \cite{liberzon2011calculus}.

 The Hamiltonian for the system (\ref{opti}) - (\ref{sec11equ3}) is given by
\begin{equation}
     H\big(I,V, D_1,D_2,D_3,\lambda\big) = I(t) + B(t) + P\big[D^{2}_{11}(t)+D^{2}_{12}(t)+D^{3}_{13}(t)\big] + Q\big[|D_{2}|^{2}\big] + R\big[|D_{3}|^{2}\big] + \lambda_{1}\frac{dS}{dt} + \lambda_{2}\frac{dI}{dt} + \lambda_{3}\frac{dB}{dt}
\end{equation}
where $\lambda = \big(\lambda_1,\lambda_2,\lambda_3\big)$ is  the co-state vector or adjoint vector. Now the {canonical equations} that relates state variable and co state variable are given by

\begin{equation}
  \begin{aligned}
  \frac{d\lambda_1}{dt} &=& -\dfrac{\partial H}{\partial S} \\
  \frac{d\lambda_2}{dt} &=& -\dfrac{\partial H}{\partial I} \\
  \frac{d\lambda_3}{dt} &=& -\dfrac{\partial H}{\partial B}
  \end{aligned}
\end{equation}

 Now substituting the value of the Hamiltonian the above equation we get
\begin{equation}
  \begin{aligned}
  \frac{d\lambda_1}{dt} &= \big(\beta B +\mu_1+\gamma+D_{11}+D_{21}-D_{31}\big)\lambda_{1}-\big(\beta B\big) \lambda_2\\
  \frac{d\lambda_2}{dt} &= \big(\mu_1+\delta+D_{12}+D_{22}\big)\lambda_2-\big(\alpha-D_{23}^2-D_{33}\big)\lambda_{3}-1 \\
  \frac{d\lambda_3}{dt} &= \big(\beta S)\lambda_{1}-\big(\beta S)\lambda_{2}+\big(y+\mu_{2}+D_{13}^{2}\big)\lambda_{3}-1
  \end{aligned}
\end{equation}
along with the transversality condition $\lambda_{1}(T)=0$, $\lambda_{2}(T)=0$ and $\lambda_{3}(T)=0$. Now using the fact that at optimal controls, $D_{ij}=D_{ij}^{*}$ and the value of Hamiltonian is minimum implying that  $\dfrac{\partial H}{\partial D_{ij}} = 0$ at $D_{ij}=D_{ij}^{*}$ for $1\leq i,j \leq 3$ and $ij\neq 32$,
and solving (7.7)  we have the following values for the optimal controls.

$$D_{11}^* = min \Bigg\{max\bigg\{\dfrac{S\lambda_{1}}{2P},0\bigg\},D_{11}max\Bigg\}$$
$$D_{12}^* = min\Bigg\{max\bigg\{\dfrac{I\lambda_{2}}{2P},0\bigg\},D_{12}max\Bigg\}$$
$$D_{13}^* = min\Bigg\{max\bigg\{\dfrac{2I\lambda_{3}}{3P},0\bigg\},D_{13}max\Bigg\}$$
$$D_{21}^* = min\Bigg\{max\bigg\{\dfrac{S\lambda_{1}}{2Q},0\bigg\},D_{21}max\Bigg\}$$
$$D_{22}^* = min\Bigg\{max\bigg\{\dfrac{I\lambda_{2}}{2Q},0\bigg\},D_{22}max\Bigg\}$$
$$D_{23}^* = min\Bigg\{max\bigg\{\dfrac{2B\lambda_{3}}{3Q},0\bigg\},D_{23}max\Bigg\}$$
$$D_{31}^* = min\Bigg\{max\bigg\{\dfrac{-S\lambda_{1}}{2R},0\bigg\},D_{31}max\Bigg\}$$
$$D_{33}^* = min\Bigg\{max\bigg\{\dfrac{I\lambda_{2}}{2R},0\bigg\},D_{33}max\Bigg\}$$

{\subsection{\bf{Numerical Studies for the  Optimal Control Problem with MDT}}} \vspace{.2cm}

\hspace{0.5 in} In this section we numerically obtain the optimal drug regimen for the control problem (\ref{opti}) - (\ref{sec11equ3}) using the optimal controls obtained in the earlier section.

\hspace{0.5 in} For the  numerical simulations we consider a time period of $100$ days 
($T=100$) and the parameter values are chosen as  $\omega = 20.9$, $\beta = 0.03$, $\mu_1 = 0.00018$,
$\gamma = 0.01795$, $\delta = 0.2681$, $\alpha = 0.2$, $y = 0.3$ and $\mu_2 = 0.57$. First
we have solved the system numerically without any drug intervention. All the numerical 
calculation were done in MATLAB  and we used $4^{th}$ order {Runge-Kutta} method to
 solve system of ODEs. Here we consider the initial value of the state variables
as $S(0)=520$, $I(0)=275$ and $B(0)=250$  as in \cite{ghosh2021mathematical}.

\hspace{0.5 in} Further to simulate the system with controls, we use the
{Forward-backward sweep} method starting with the initial value of the controls as
zero and estimate the sate variables forward in time. Since the the transversality
conditions have the value of adjoint vector at end time $T,$ so the adjoint vector was calculated backward in time.

\hspace{0.5 in} Using the value of state variables and adjoint vector we
 calculate the  control variables at each time instance that get
updated in each iteration. We continue this till the convergence criterion is met
\cite{lenhart2007optimal}.

\hspace{0.5 in}  The weights  $P, Q$ and
$R$  in the cost function $\mathcal{J}_{min}$ are chosen based on their  \textit{hazard ratio} of the corresponding drugs. We chose the weights directly proportional to the hazard ratios.  In {Table} \ref{HR} the {hazard ratios} of the
different drugs are enlisted. We have chosen the weights$(P, Q$
and $S)$ proportional to  the hazard ratios i.e. $P = 1$, $Q = 1.99$ and $R = 7.1$.

\begin{table}[ht!]   
\centering 
\begin{tabular}{|c|c|c|} 
\hline

\textbf{Drugs} &  \textbf{Hazard Ratio } & \textbf{Source} \\ 

\hline\hline
Rifampin & 0.26 & \cite{bakker2005prevention} \\

\hline\hline
Dapsone & 0.99 & \cite{cerqueira2021influence}  \\

\hline\hline
Clofazimine & 1.85 & \cite{cerqueira2021influence}\\

\hline

\end{tabular}
\caption{Hazard Ratio of the drugs }
\label{HR}
\end{table}

We now numerically simulate the $S, I $ and $B$ populations without control interventions, with single control intervention, with two control interventions and finally with three control interventions of MDT.

\begin{figure}[ht!]
    \centering
    \includegraphics[height = 7cm, width =16cm]{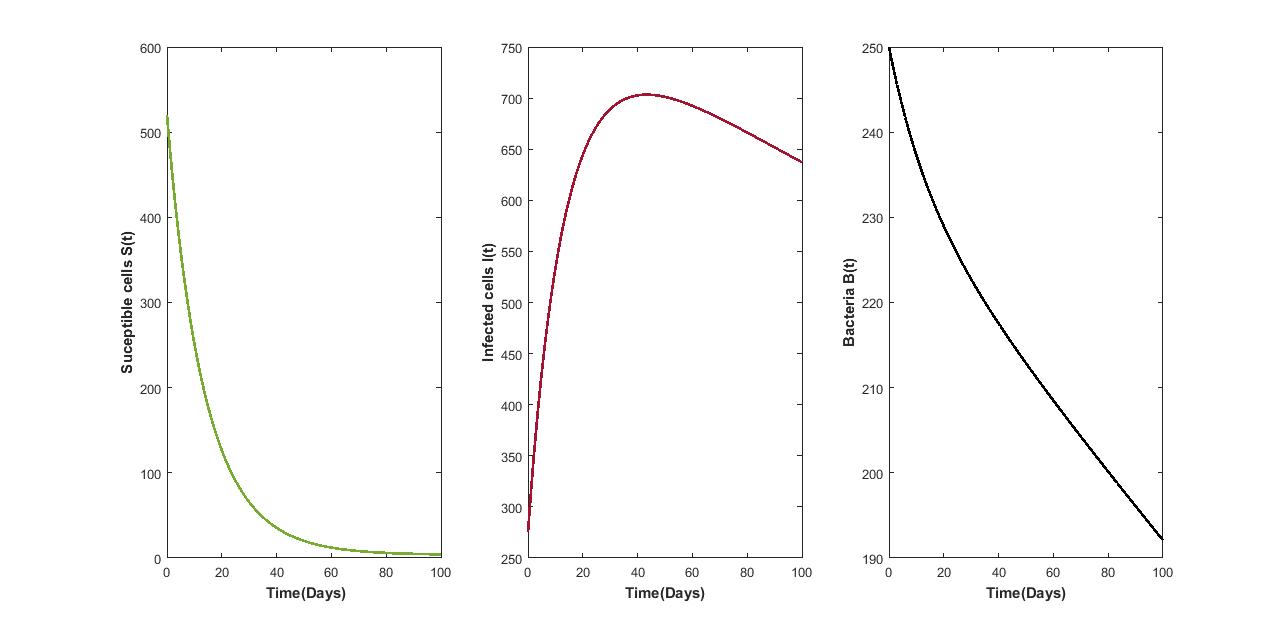}

    \caption{Plots depicting the  $S, I $ and $B$ populations without any control interventions }
    \label{wctrl}
\end{figure} 

\vspace{.3cm}

\hspace{0.5 in} The  {Figure} \ref{wctrl} depicts  the dynamics of the $S,  I $ and $B$ populations without any control/drug interventions

\newpage

\begin{figure}[ht!]
    \centering
    \includegraphics[height = 7cm, width = 16cm]{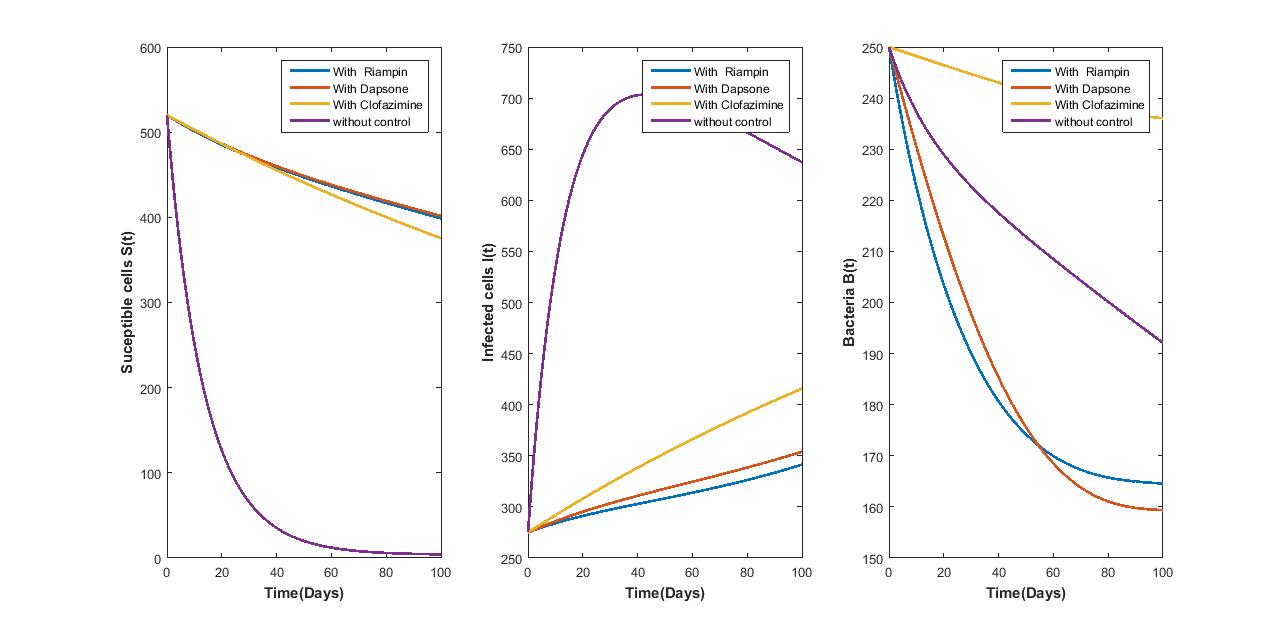}

    \caption{Plots depicting the  dynamics of the $S, I $ and $B$ populations  when one drug is introduced}
    \label{onedrug}
\end{figure}

\begin{figure}[ht!]
    \centering
    \includegraphics[height = 7cm, width =16cm]{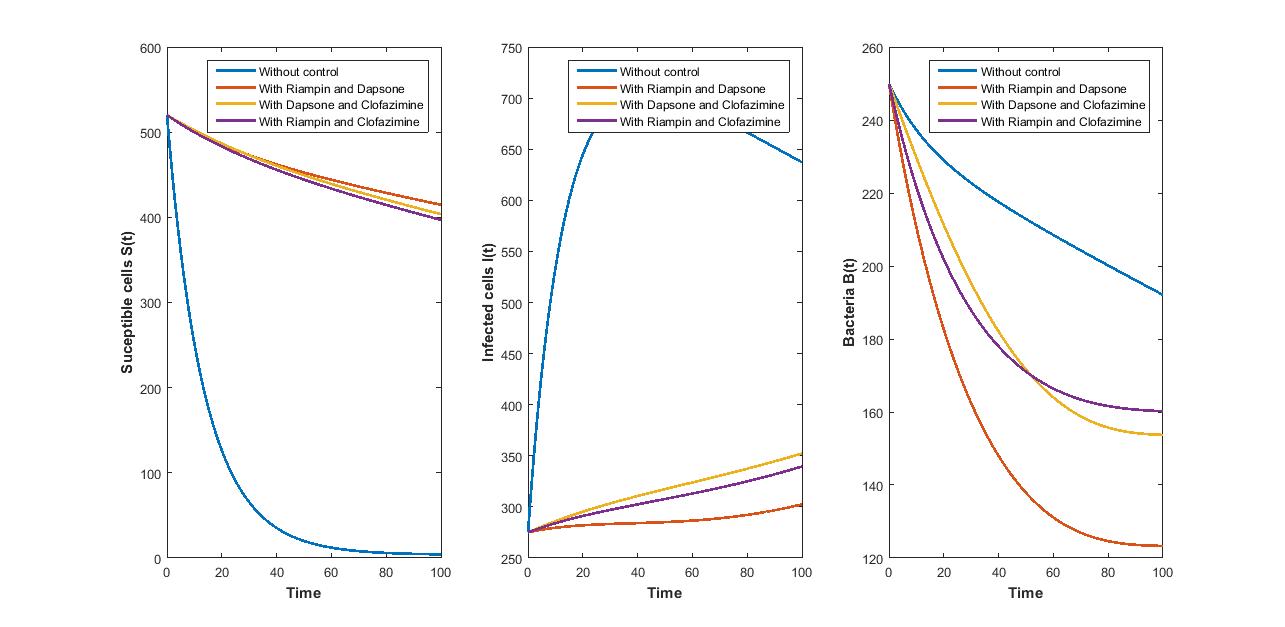}

    \caption{Plots depicting the  dynamics of  the $S, I $ and $B$ populations  when combination of two drug is introduced}
    \label{twodrug}
\end{figure}

\vspace{.2cm}

\hspace{0.5 in} The plot \ref{onedrug} illustrate that when individually drugs are administered the {susceptible cell} count decrease  and the opposite effect is seen for {infected } cells  and {bacterial load } compartments. One notable thing is there that the {clofazimine} alone can't decrease the {bacterial load } in the long run.  {Figure} \ref{twodrug} shows that the combination of two drugs are more effective than one drug given at a time. As earlier here as we see the {susceptible cell} count decrease and increase in both the {infected } cells  and {bacterial load } compartments.

\newpage

\begin{figure}[ht!]
    \centering
    \includegraphics[height = 7cm, width =16cm]{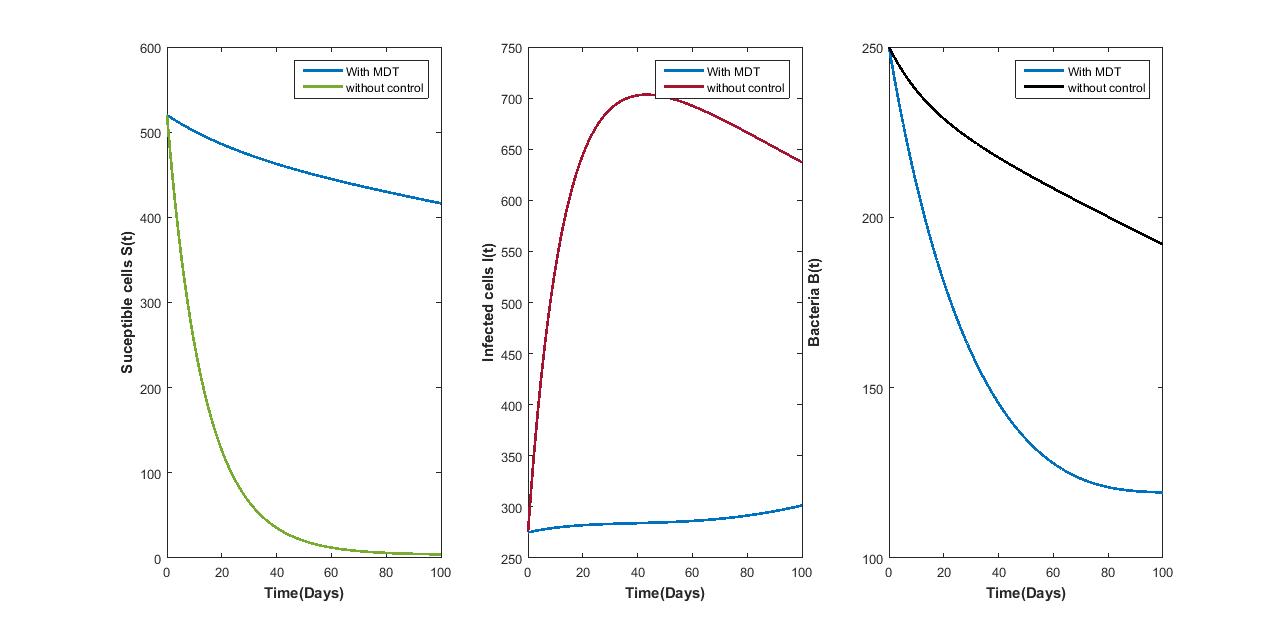}
    \caption{Plots depicting the  dynamics of the the $S, I $ and $B$ populations  with MDT intervention}
    \label{wmdt}
\end{figure}

\vspace{.2cm}

\hspace{0.5 in} The {Figure}  \ref{wmdt} shows the dynamics of 
the $S,  I $ and $B$ populations with MDT intervention whose findings are in similar lines to earlier two plots.

The following  {Table} \ref{avg}  gives the average $S, I $ and $B$  cell count for single drug, two drug combination and MDT scenarios. From the table it can be seen that MDT is the best and optimal combination for achieving the optimal increase in susceptible cells and optimal decrease in both infected cells and bacterial load.

\vspace{.2cm}

\begin{table}[ht!]   
\centering 
\begin{tabular}{|c|c|c|c|} 
\hline

\textbf{Drug Combination} &  \textbf{Avg susceptible cells} & \textbf{Avg Infected cells} & \textbf{Avg Bacterial load } \\ 

\hline\hline
Rifampin & 450.845778 & 308.544767 & 184.353265  \\

\hline\hline
Dapsone & 452.654249 & 316.861762 & 185.826321 \\

\hline\hline
Clofazimine & 443.133511 & 350.173515 & 241.995028 \\

\hline\hline
Rifampin and Dapsone & 457.141441 & 286.714732 & 153.050429  \\

\hline\hline
Rifampin and Clofazimine & 453.456572 & 316.303543 & 182.278864  \\

\hline\hline
Dapsone and Clofazimine & 448.689818 & 307.856580 & 181.470695  \\

\hline\hline
MDT & 457.899776 & 286.431294 & 150.360779  \\

\hline

\end{tabular}
\caption{ Average count of the $S, I $ and $B$  cells for single drug, two drug combination and MDT scenarios }
\label{avg}
\end{table}

\vspace{.3cm}

\subsection{{\bf{Optimal control problem associated with MDT along with steroids}}}

\vspace{.2cm}

\hspace{0.5in} {\textit{Corticosteroid}} is a steroid which is mainly used for
protecting the nerve damage by suppressing the cytokines responses caused
due to presence of \textit{M. leprae} \cite{shetty2010effect}. Corticosteroid is usually given after some days of MDT drugs. To capture this aspect we introduce a time delay $\tau$ in the MDT control. In others words we consider $D_{ij}'s$ at $(t-\tau)$  and consider the control associated with steroid as $C(t)$. 

\hspace{0.5 in} With the above modifications, the  set of controls now is given by 

$$U=\Big\{D_{ij}(t):D_{ij}(t)\in[0,D_{ij}max],C(t)\in [0,Cmax] 1\leq i,j \leq 3,ij \neq 32, t\in[0,T]\Big\}$$

\hspace{0.5 in} and the modified objective function and control system is given by

\begin{equation}
  \begin{aligned}
   \mathcal{J}_{min}\big(D_{1},D_{2},D_{3}\big) &= \int_{0}^{T} \Big(I(t) + B(t)+P\big[D^{2}_{11}(t-\tau)+D^{2}_{12}(t-\tau)+D^{3}_{13}(t-\tau)\big]\\
   & +Q\big[D^{2}_{21}(t-\tau)+D^{2}_{22}(t-\tau)+D^{3}_{23}(t-\tau)\big] + R\big[|D_{3}(t-\tau)|^{2}\big] + T C^2(t) \Big) dt 
  \label{opti2}
  \end{aligned}
\end{equation}

\begin{eqnarray}
   	\frac{dS}{dt}& =&  \omega \ - \beta SB  - \gamma S - \mu_{1} S -D_{11}(t-\tau)S - D_{21}(t-\tau)S+D_{31}(t-\tau)S+C(t)S\label{sec11.3equ1} \\
   	\frac{dI}{dt} &=& \beta SB \ -\delta I - \mu_{1} I-D_{12}(t-\tau)I - D_{22}(t-\tau)I  \label{sec11.3equ2}\\ 
   	\frac{dB}{dt} &=&  \big(\alpha-D_{23}^2(t-\tau)-D_{33}(t-\tau)\big) I  \ - y B    -  \mu_{2} B-D_{13}^2(t-\tau)B \label{sec11.3equ3}
\end{eqnarray} 

 Here the  the Lagrangian  is the integrand of the cost function   (\ref{opti2}) and is given by 
\begin{equation}
   \begin{aligned}
    L\big(I,V, D_1,D_2,D_3,C\big)&= \Big(I(t) + B(t)+P\big[D^{2}_{11}(t-\tau)+D^{2}_{12}(t-\tau)+D^{3}_{13}(t-\tau)\big]\\
   & +Q\big[D^{2}_{21}(t-\tau)+D^{2}_{22}(t-\tau)+D^{3}_{23}(t-\tau)\big] + R\big[|D_{3}(t-\tau)|^{2}\big] + T C^2(t) \Big)
   \end{aligned}
\end{equation}

 The admissible set of solutions for the above optimal control problem  will now lie in the set

$$\Omega =\Big\{\big(I,V,D_1,D_2,D_3,C\big): I,V satisfy \hspace{0.1in} (\ref{sec11equ1})-(\ref{sec11equ3}) \  \forall \big(D_1,D_2,D_3,C\big)\in U\Big\}$$

The existence of the optimal control can be shown in the similar way as it was shown in the previous optimal control problem in the preceding section.

We see that the  Hamiltonian for the system (\ref{opti2}) - (\ref{sec11.3equ3}) is given by

\begin{equation}
     H\big(I,V, D_1,D_2,D_3,\lambda\big) = L\big(I,V, D_1,D_2,D_3,C\big)+ \lambda_{1}\frac{dS}{dt} + \lambda_{2}\frac{dI}{dt} + \lambda_{3}\frac{dB}{dt}
\end{equation}
where $\lambda=\big(\lambda_1,\lambda_2,\lambda_3\big)$ is  the co-state vector or adjoint vector. Now the \textit{canonical equations} that relates state variable and co state variable are given by

\begin{equation}
  \begin{aligned}
  \frac{d\lambda_1}{dt} &=& -\dfrac{\partial H}{\partial S} \\
  \frac{d\lambda_2}{dt} &=& -\dfrac{\partial H}{\partial I} \\
  \frac{d\lambda_3}{dt} &=& -\dfrac{\partial H}{\partial B}
  \end{aligned}
\end{equation}

Now substituting the value of the Hamiltonian in the above equation we get
\begin{equation}
  \begin{aligned}
  \frac{d\lambda_1}{dt} &= \big(\beta B +\mu_1+\gamma+D_{11}(t-\tau)+D_{21}(t-\tau)-D_{31}(1-\tau)-C(t)\big)\lambda_{1}-\big(\beta B\big) \lambda_2\\
  \frac{d\lambda_2}{dt} &= \big(\mu_1+\delta+D_{12}(t-\tau)+D_{22}(t-\tau)\big)\lambda_2-\big(\alpha-D_{23}^2(t-\tau)-D_{33}(t-\tau)\big)\lambda_{3}-1 \\
  \frac{d\lambda_3}{dt} &= \big(\beta S)\lambda_{1}-\big(\beta S)\lambda_{2}+\big(y+\mu_{2}+D_{13}^{2}(t-\tau)\big)\lambda_{3}-1
  \end{aligned}
\end{equation}
\hspace{0.5 in}along with the transversality condition $\lambda_{1}(T)=0$, $\lambda_{2}(T)=0$ and $\lambda_{3}(T)=0$. 

We now have  $\frac{\partial H}{\partial D_{ij}}=0$ and $\frac{\partial H}{\partial C}=0$ at $D_{ij}=D_{ij}^{*}$ and $C=C^*$ for $1\leq i,j \leq 3$ and $ij\neq 32$.\\

\hspace{0.5 in}Now  differentiating the {Hamiltonian} and solving it for $D_{ij}^{*}$ and $C^*$ we have the values  for the optimal controls as

$$D_{11}^*(t-\tau) = min \Bigg\{max\bigg\{\dfrac{S\lambda_{1}}{2P},0\bigg\},D_{11}max\Bigg\}$$
$$D_{12}^*(t-\tau) = min\Bigg\{max\bigg\{\dfrac{I\lambda_{2}}{2P},0\bigg\},D_{12}max\Bigg\}$$
$$D_{13}^*(t-\tau) = min\Bigg\{max\bigg\{\dfrac{2I\lambda_{3}}{3P},0\bigg\},D_{13}max\Bigg\}$$
$$D_{21}^*(t-\tau) = min\Bigg\{max\bigg\{\dfrac{S\lambda_{1}}{2Q},0\bigg\},D_{21}max\Bigg\}$$
$$D_{22}^*(t-\tau) = min\Bigg\{max\bigg\{\dfrac{I\lambda_{2}}{2Q},0\bigg\},D_{22}max\Bigg\}$$
$$D_{23}^*(t-\tau) = min\Bigg\{max\bigg\{\dfrac{2B\lambda_{3}}{3Q},0\bigg\},D_{23}max\Bigg\}$$
$$D_{31}^*(t-\tau) = min\Bigg\{max\bigg\{\dfrac{-S\lambda_{1}}{2R},0\bigg\},D_{31}max\Bigg\}$$
$$D_{33}^*(t-\tau) = min\Bigg\{max\bigg\{\dfrac{I\lambda_{2}}{2R},0\bigg\},D_{33}max\Bigg\}$$
$$C^*(t) = min\Bigg\{max\bigg\{\dfrac{S\lambda_{1}}{2T},0\bigg\},D_{33}max\Bigg\}$$

\subsubsection{Numerical simulations for  Optimal Control with both MDT and Steroids} 

 \hspace{0.5 in} Here we use all the parameter values and initial conditions  same as  in the previous optimal control problem. The value of the weight $T$ was chosen to be $6.4230$ based on the  hazard ratio  value $1.67$  \cite{cerqueira2021influence}. Instead of forward backward sweep we use only forward sweep for calculating the state variables and adjoint vectors after the delay $\tau$.  Here we considered $\tau= 55 $  days and the step size as $h = 0.0000045$. 
 
 
 \begin{figure}[ht!]
    \centering
    \includegraphics[height = 7cm, width =16cm]{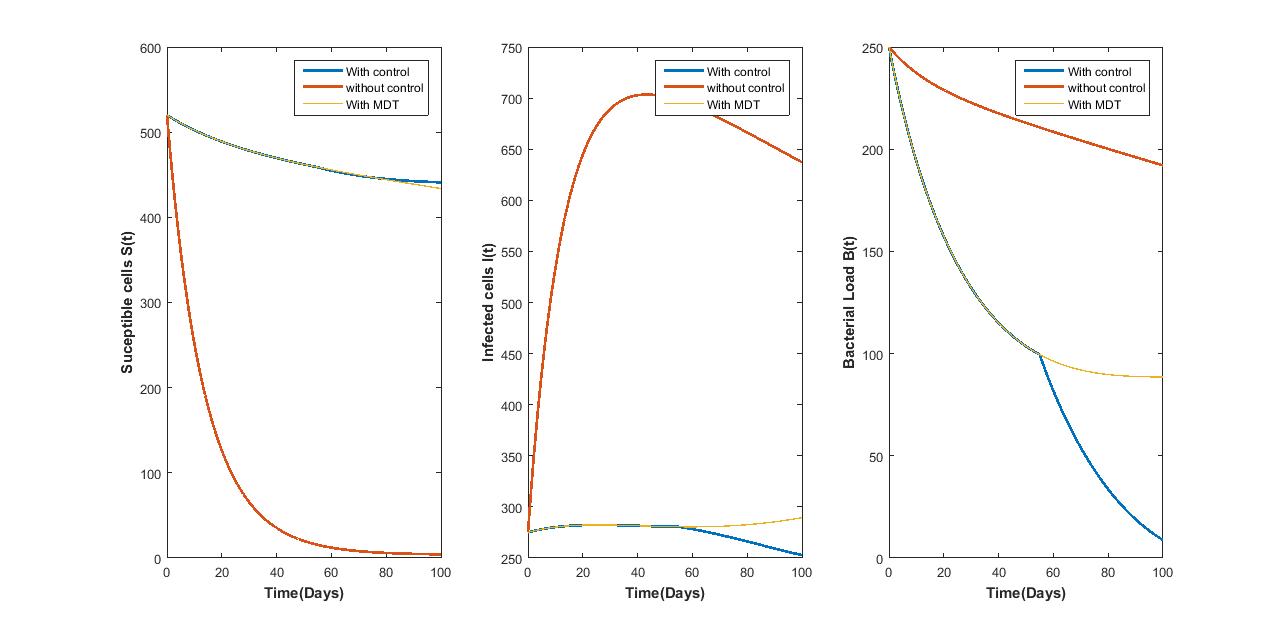}
    \caption{Plots depicting the system dynamics when MDT and steroids are  intervened.}
    \label{wster}
\end{figure}

From the Figure \ref{wster} it can be seen that the combined combination of MDT and  corticosteroid seems to be doing the best job in decreasing the lepra type 1 reaction disease burden.

\section{Comparative and  Effectiveness Study} \vspace{.2cm} 

\hspace{0.5 in} In this section we will perform the comparative
and effectiveness study for the system (\ref{sec11equ1}) -  (\ref{sec11equ3}).  

\hspace{0.5 in} For this system without any control/drug interventions the basic reproduction number is given by 

$$\mathcal{R}_{0}=\frac{\alpha \beta \omega}{(\gamma+\mu_{1})(\delta+\mu_{1})(y+\mu_{2})}$$

\hspace{0.5 in} Now to study the effectiveness of each of these control/drug interventions we calculate the modified reproduction number $\overline{\mathcal{R}_{0}}$  based on the modified parameters which gets altered owing to these interventions as follows: 

\begin{itemize}
    \item The drug dapsone primarily acts on the inhibition of viral replication. Based on this we consider $\alpha$ to be $\alpha(1-\epsilon)$ where $\epsilon$ denotes the efficiency of the drug dapson.

    \item Since the drug {rifampin} is a killer of bacteria it indirectly
  reduces the interaction between {susceptible} cells and the
  {bacteria}. Owing to this we choose $\beta$  as    $\beta(1-\rho)$ where $\rho$ denotes the efficacy of {rifampin} in killing bacteria.

    \item The drug clofazimine primarily inhibits the cytokines responses indirectly reducing the death of healthy cells. Owing to this we consider $\gamma$ to be 
    $\frac{\gamma}{(1-c)}$ where $c$ denotes the efficacy of {clofazimine} in supressing cytokines responses.

\end{itemize}

\hspace{0.5 in} With the above modified parameters based on the action of control/drug interventions, we get the modified reproduction number $\overline{\mathcal{R}_{0}}$  as

$$\overline{\mathcal{R}_{0}}=\dfrac{\alpha (1-\epsilon)\beta (1-\rho) \omega}{\Big(\dfrac{\gamma}{1-c}+\mu_{1}\Big)(\delta+\mu_{1})(y+\mu_{2})}$$

\hspace{0.5 in} We now do the comparative and  effectiveness study  by calculating the percentage of reduction  $\mathcal{R}_{0}$ with reference to modified  $\overline{\mathcal{R}_{0}}$ as follows:

$$Percentage \hspace{0.1 in} of \hspace{0.1 in} reduction \hspace{0.1 in} in \hspace{0.1 in} \mathcal{R}_{0}=\Bigg[\frac{\mathcal{R}_{0}-\overline{\mathcal{R}_{0}}}{\mathcal{R}_{0}}\Bigg]\times 100 $$

\hspace{0.5 in} We do this study for  different efficacy levels of the drugs such as

 (a) Low Efficacy (LE) given by  $0.3$ (b) Medium Efficacy (ME)  given by $0.6$ and (c) High Efficacy (HE) given by  $0.9$.

\hspace{0.5 in} In the following table the comparative and effectiveness study is done and the the drug combinations are ranked based on the reduction in percentage of ${\mathcal{R}_{0}}$ for different efficacy levels of the drugs. The highest rank is given for the drug combination that has highest reduction in the reproduction number.  The efficacy at different levels were chosen with {rifampin}  taken as the
base value  and the efficacy of {dapsone} and
{clofazimine} were taken lesser than this based
on their hazard ratios using the fact that higher the
hazard ratio lower the efficacy level. \\

\begin{table}[ht!]   
\centering 
\begin{tabular}{|c|c|c|c|c|c|c|c|} 
\hline

\textbf{Sl No} &  \textbf{Drug Combination } & \textbf{$\%$age LE } & \textbf{Rank}&  \textbf{$\%$age ME } & \textbf{Rank}&  \textbf{$\%$age HE } & \textbf{Rank}  \\ 

\hline\hline
1 & Rifampin & 30.000000 & 4 & 60.000000 & 4 & 90.000000 & 4 \\

\hline\hline
2 & Dapsone & 7.880000 & 2 & 15.750000 & 2 & 23.630000 & 2 \\

\hline\hline
3 & Clofazimine & 0.043724 & 1 & 0.091317 & 1 & 0.143575 & 1\\

\hline\hline
4 & Rifampin and Dapsone & 35.516000 & 6 & 66.300000 & 6 & 92.363000 & 6 \\

\hline\hline
5 & Rifampin and Clofazimine & 30.030607 & 5 & 60.036527 & 5 & 90.014357 & 5\\

\hline\hline
6 & Dapsone and Clofazimine & 7.920279 & 3 & 15.826935 & 3 & 23.739648 & 3 \\

\hline\hline
7 & MDT & 35.544195 & 7 & 66.330774 & 7 & 92.373965 & 7\\

\hline

\end{tabular}

\caption{ Comparative and  effectiveness study in terms of ranking  for  different combinations of drug interventions  for Low efficacy (LE), Medium efficacy (ME) and High efficacy (HE)    }
\label{eff}
\end{table}  \vspace{.2cm}

From the above Table \ref{eff} dealing with the comparative and effectiveness study it can be seen that MDT treatment seems to be working the best in reducing the ${\mathcal{R}_{0}}$ percentage in comparison to single drug and two drug combinations. These findings are in line with the conclusion made for MDT interventions in section 7.4 in the optimal control setting.  \\

\section{Discussions and Conclusions} \vspace{.2cm}

\hspace{0.5 in} Based on the pathogenesis of leprosy in this work we have framed an deterministic model dealing with the type - I  lepra reaction  and the causation biomarkers . We initially studied the entire natural history of this model.
 The findings from this study include the following. The proposed system admits two steady dynamic states one being  disease-free
equilibrium and the other being the infected equilibrium. For  $\mathcal{R}_{0} < 1$ the system  tends to stabilized around the disease free equilibrium and for $\mathcal{R}_{0} > 1$ the
system tends to stabilize around the infected equilibrium. The system undergoes a trans-critical bifurcation at $\mathcal{R}_{0} = 1$.   This developed model was validated through the 2D heat plot based on the characteristic of average doubling time of the {\textit{ M.Laprae. }} The
sensitivity analysis using  PRCC and SRCC methods showed that the burst rate of the bacteria $\alpha,$
is the most sensitive parameter and in case of combination of two parameters,
the rate of death of infected cells due to cytokines $\delta,$ in combination with $\alpha$ seemed to be the most sensitive parameter combination. \\

\hspace{0.5 in}  After the natural history, we studied two optimal control problems the first dealing with the MDT interventions and second dealing with MDT along with steroid interventions. The findings from these studies include the following. For individual drug intervention scenario, the drug {rifampin} has the highest impact in reducing both the infected cells and the bacterial load. For the two drug combinations scenario, 
{rifampin} along  {dapsone} combination was the best in reducing the disease burden.  Finally we concluded that MDT combination drug intervention was the best in reducing the disease burden in comparison with single and two drug combinations.  The {Table} \ref{avg} summarizes and justifies the above findings.
Further the optimal control problem dealing with MDT along with steroid interventions also led to the conclusion that the optimal intervention is the combined intervention of administering MDT along with steroid intervention.
 The findings from the  comparative and  effectiveness study
show that the drug {clofazimine} has the least impact in reducing the disease burden when applied individually and the drug rifampin has the highest impact. Overall MDT intervention does the best job in reducing the disease burden. The findings from  the comparative and  effectiveness study are in line with the observations of the optimal control studies. \\

\hspace{0.5 in} This within-host modeling study of type - I  lepra reaction  involving the crucial biomarkers   is a first of its kind.  The finding from this novel
and comprehensive study will hep the clinicians and public health researchers in early detection of lepra reactions through study of biomarkers for prevention of subsequent disabilities.

\vspace{.2cm}

\printbibliography

 
 
\end{document}